\numberwithin{equation}{section}
\newtheorem{thm}{Theorem}[section]
\newtheorem{cor}[thm]{Corollary}
\newtheorem{lem}[thm]{Lemma}
\newtheorem{rem}{Remark}
\title{
Error bounds for the normal approximation to \\
the length of a Ewens partition\thanks{This work was partly supported by Japan Society for the Promotion of Science KAKENHI Grant Number 16H02791, 18K13454.}}
\author{Koji Tsukuda\thanks{Graduate School of Arts and Sciences, The University of Tokyo, 3-8-1 Komaba, Meguro-ku, Tokyo 153-8902, Japan.}}
\date{}
\begin{document}
\maketitle

\begin{abstract}
Let $K(=K_{n,\theta})$ be a positive integer-valued random variable whose distribution is given by ${\rm P}(K = x) = \bar{s}(n,x) \theta^x/(\theta)_n$ $(x=1,\ldots,n) $, where $\theta$ is a positive number, $n$ is a positive integer, $(\theta)_n=\theta(\theta+1)\cdots(\theta+n-1)$ and $\bar{s}(n,x)$ is the coefficient of $\theta^x$ in $(\theta)_n$ for $x=1,\ldots,n$.
This formula describes the distribution of the length of a Ewens partition, which is a standard model of random partitions.
As $n$ tends to infinity, $K$ asymptotically follows a normal distribution.
Moreover, as $n$ and $\theta$ simultaneously tend to infinity, if $n^2/\theta\to\infty$, $K$ also asymptotically follows a normal distribution.
In this paper, error bounds for the normal approximation are provided.
The result shows that the decay rate of the error changes due to asymptotic regimes. 
\end{abstract}

\section{Introduction}\label{sec1}
Consider a nonnegative integer-valued random variable $K (=K_{n,\theta})$ that follows
\begin{equation} \label{FFD}
{\rm P}(K = x) = \frac{\bar{s}(n,x) \theta^x}{(\theta)_n} \quad (x=1,\ldots,n) ,
\end{equation}
where $\theta$ is a positive value, $n$ is a positive integer, $(\theta)_n=\theta(\theta+1)\cdots(\theta+n-1)$ and $\bar{s}(n,x)$ is the coefficient of $\theta^x$ in $(\theta)_n$.
This distribution is known as the falling factorial distribution \citep[equation (2.22)]{RefW}, STR1F (i.e., the Stirling family of distributions with finite support related to the Stirling number of the first kind) \citep{RefS0,RefS}, and the Ewens distribution \citep{RefKMS}.
The formula \eqref{FFD} describes the distribution of the length of a Ewens partition, which is a standard model of random partitions.
A random partition is called a Ewens partition when the distribution of the partition is given by the Ewens sampling formula. 
The Ewens sampling formula and \eqref{FFD} appear in a lot of scientific fields and have been extensively studied; see, e.g., \citet[Chapter 41]{RefJKB} or \cite{RefC}.
In the context of population genetics, \eqref{FFD} was discussed in \cite{RefE} as the distribution of the number of allelic types included in a sample of size $n$ from the infinitely-many neutral allele model with scaled mutation rate $\theta$; see also \citet[Section 1.3]{RefD}.
Moreover, in the context of nonparametric Bayesian inference, \eqref{FFD} describes the law of the number of distinct values in a sample from the Dirichlet process; see, e.g., \citet[Section 4.1]{RefGV}.
Furthermore, as introduced in \cite{RefS0}, \eqref{FFD} relates to several statistical or combinatorial topics such as permutations, sequential rank order statistics and binary search trees.

Simple calculations imply that
\[ {\rm E}[K] = \theta \sum_{i=1}^n \frac{1}{\theta +i -1}  , \quad {\rm var}(K) = \theta \sum_{i=1}^n \frac{i-1}{(\theta +i -1)^2}, \]
and
\begin{equation} \label{logapp}
{\rm E}[K] \sim {\rm var}(K)  \sim \theta\log{n} 
\end{equation}
as $n\to\infty$.
Let $\tilde{F}_{n,\theta} (\cdot)$ be the distribution function of the random variable
\[ Z_{n,\theta} = \frac{K - \theta \log{n} }{\sqrt{\theta \log{n}} } \]
standardized by the leading terms of the mean and variance, and $\Phi(\cdot)$ be the distribution function of the standard normal distribution.
By calculating the moment generating function of $Z_{n,\theta}$, \cite{RefW1} proved that $Z_{n,\theta}$ converges in distribution to the standard normal distribution; that is, $ \tilde{F}_{n,\theta} (x) \to \Phi (x) $ as $n\to\infty$ for any $x\in\mathbb{R}$.
For the history concerning this result, we refer readers to \citet[Remark after Theorem 3]{RefAT}.
In particular, when $\theta=1$ \cite{RefG} proved that $ \tilde{F}_{n,1} (x) \to \Phi (x) $ for any $x\in\mathbb{R}$.
From a theoretical perspective, it is important to derive error bounds for the approximation.
\cite{RefY} discussed the first-order Edgeworth expansion of $\tilde{F}_{n,\theta}(\cdot)$ via the Poisson approximation \citep[Remark after Theorem 3]{RefAT} and proved that
$\| \tilde{F}_{n,\theta} - \Phi \|_\infty = O \left(1/\sqrt{\log{n}} \right)$,
where $\|\cdot\|_\infty$ is the $\ell^\infty$-norm defined by
\[\| f \|_\infty = \sup_{x\in \mathbb{R}} |f(x)| \]
for a bounded function $f:\mathbb{R}\ni x\mapsto f(x)$.
Note that when $\theta=1$, \citet[Example 1]{RefH} showed that $\| \tilde{F}_{n,1} - \Phi \|_\infty =O \left(1/\sqrt{\log{n}} \right)$.
\cite{RefKMS} derived the Edgeworth expansion of the probability function of $K$, and provided the first-order Edgeworth expansion of $\tilde{F}_{n,\theta}(\cdot)$.

As the standardization of $Z_{n,\theta}$ comes from \eqref{logapp}, the normal approximation only works well when $n$ is sufficiently large with respect to $\theta$.
However, this assumption has limited validity in practical cases, so it is important to consider alternative standardized variables; see, e.g., \cite{RefY} and \cite{RefYNT}.
In particular, we consider the random variables $X_{n,\theta} $ and $Y_{n,\theta} $ defined by
\[ {X}_{n,\theta} = \frac{K - \mu_0 }{\sigma_0}
\quad {\rm and } \quad
 {Y}_{n,\theta} = \frac{K - \mu_T}{\sigma_T } ,\]
where 
\[ \mu_0 = {\rm E}[K], \quad
\sigma_0^2 = {\rm var}(K) ,
 \]
\[ \mu_T = \theta \log\left(1+\frac{n}{\theta}\right),
\quad {\rm and} \quad
\sigma_T^2 = \theta \left( \log\left(1+ \frac{n}{\theta} \right) + \frac{\theta}{n+\theta} -1 \right). \]
These are standardized random variables that use the exact moments and approximate moments, respectively.
Denote the distribution functions of $X_{n,\theta}$ and $Y_{n,\theta}$ by $F_{n,\theta}(\cdot)$ and $G_{n,\theta}(\cdot)$, respectively.
Then, \citet[Theorem 2 and Remark 6]{RefT} proved that, under the asymptotic regime $n^2/\theta\to\infty$ and $\theta\not\to0$ as $n\to\infty$ (see subsection \ref{ss:11} for the explicit assumptions), both $F_{n,\theta}(x)$ and $G_{n,\theta}(x)$ converge to $\Phi(x)$ as $n\to\infty$ for any $x\in \mathbb{R}$.
The problem considered in this paper is to provide upper and lower bounds for the approximation errors $\| F_{n,\theta} - \Phi \|_\infty $ and $\| G_{n,\theta} - \Phi \|_\infty $.

\begin{rem}
It holds that $\mu_0\sim\mu_T$ and $\sigma_0\sim\sigma_T$ as $n\to\infty$ with $n^2/\theta\to\infty$.
\end{rem}

\subsection{Assumptions and asymptotic regimes}\label{ss:11}

As explained in the Introduction, the regime $n\to\infty$ with fixed $\theta$ is sometimes unrealistic.
Hence, we consider asymptotic regimes in which $\theta$ increases as $n$ increases.
Such regimes have been discussed in \citet[Section 4]{RefF} and \cite{RefT,RefT2}.
We follow these studies.
In this subsection, let us summarize the assumptions on $n$ and $\theta$.

First, $\theta$ is assumed to be nondecreasing with respect to $n$.
Moreover, when we take the limit operation, $n^2/\theta\to\infty$ is assumed.

The following asymptotic regimes are discussed in this paper:
\begin{itemize}
\item Case A: $n/\theta\to\infty$
\item Case B: $n/\theta\to c$, where $0<c<\infty$
\item Case C: $n/\theta\to0$
\item Case C1: $n/\theta\to0$ and $n^2/\theta\to\infty$
\end{itemize}

\begin{rem}
\cite{RefF} was apparently the first to consider the asymptotic regimes in which $n$ and $\theta$ simultaneously tend to infinity.
Specifically, Cases A, B, and C were considered by \citet[Section 4]{RefF}.
Case C1 was introduced by \cite{RefT}.
\end{rem}

Furthermore, let $c^\star$ be the unique positive root of the equation
\begin{equation}\label{caseB0}
\log\left(1+x \right) - 2 + \frac{3}{x+1} - \frac{1}{(x+1)^2} = 0.
\end{equation}
Then, we introduce a new regime, Case B$^\star$, as follows:
\begin{itemize}
\item Case B$^\star$: $n/\theta\to c$, where $0<c<\infty$ and $c\neq c^\star$.
\end{itemize}

\begin{rem}
Solving \eqref{caseB0} numerically gives $c^\star = 2.16258 \cdots$. 
\end{rem}

\section{Main results}\label{sec:2}
This section presents Theorems \ref{thm1} and \ref{thm2} which are the main results of this paper and their corollaries.
Proofs of the results in this section are provided in Section \ref{sec:4}.

\subsection{An upper error bound}\label{ss21}
In this subsection, an upper bound for the error $\| F_{n,\theta} - \Phi \|_\infty$ is given in Theorem \ref{thm1}, and its convergence rate is given in Corollary \ref{cor1}.
Moreover, the convergence rate of the upper bound for the error $\| G_{n,\theta} - \Phi \|_\infty$ is given in Corollary \ref{cor2}.

We now present the first main theorem of this paper.

\begin{thm}\label{thm1}
Assume that there exists $n_0(=n_0(\theta))$ such that
\begin{equation}
 \theta \left( \log\left(1+\frac{n}{\theta}\right) - 1 + \frac{\theta}{n+\theta} \right) + \frac{n}{2(\theta+n)} - 1 >0 \label{assth1}
\end{equation}
for all $n\geq n_0$.
Then, it holds that
\[ \| F_{n,\theta} - \Phi \|_\infty \leq C \gamma_1 \]
for all $n \geq n_0$, where $C$ is a constant not larger than 0.5591 and 
\begin{equation}\label{gam1}
\gamma_1 =  
\frac{ \theta\left\{ \log\left( 1+ \frac{n}{\theta} \right) - \frac{5}{3} + \frac{3\theta}{n+\theta} - \frac{2\theta^2}{(n+\theta)^2} + \frac{2\theta^3}{3(n+\theta)^3}  \right\} + 4 + \frac{n}{n+\theta}}
{\left\{ \theta \left( \log\left(1+\frac{n}{\theta}\right) - 1 + \frac{\theta}{n+\theta} \right) + \frac{n}{2(\theta+n)} - 1  \right\}^{3/2}}. 
\end{equation}
\end{thm}

\begin{rem}
Under our asymptotic regime ($n^2/\theta\to\infty$), \eqref{assth1} is valid for sufficiently large $n$.
\end{rem}

\begin{rem}
The constant $C$ in Theorem \ref{thm1} is the universal constant appearing in the Berry--Esseen theorem.
\end{rem}

Theorem \ref{thm1} and asymptotic evaluations of the numerator and denominator of $\gamma_1$ yield the following corollary.

\begin{cor}\label{cor1}
In Cases A, B, and C1, it holds that
\begin{eqnarray*} 
\| F_{n,\theta} - \Phi \|_\infty
&=& O\left( \frac{1}{\sqrt{\theta\{\log(1+n/\theta) - 1 + \theta/(n+\theta) \}}} \right)\\
&=&\begin{cases}
  O\left(1/\sqrt{\theta\log\left({n}/{\theta}\right)} \right) & {\rm (Case \ A), } \\
  O\left(1/\sqrt{\theta}\right) & {\rm (Case \ B), } \\
  O\left(1/\sqrt{n^2/\theta}\right) & {\rm (Case \ C1).}
\end{cases} 
\end{eqnarray*}
\end{cor}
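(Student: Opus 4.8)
The plan is to apply Theorem \ref{thm1} and reduce the problem to the asymptotic size of $\gamma_1$ in \eqref{gam1}. Writing $r = n/\theta$ and using $\theta/(n+\theta) = 1/(1+r)$ and $n/(n+\theta) = r/(1+r)$, I would introduce the two auxiliary functions
\[ g(x) = \log(1+x) - 1 + \frac{1}{1+x}, \qquad h(x) = \log(1+x) - \frac{5}{3} + \frac{3}{1+x} - \frac{2}{(1+x)^2} + \frac{2}{3(1+x)^3}, \]
so that the base of the denominator of $\gamma_1$ equals $D := \theta g(r) + \tfrac{1}{2}\tfrac{r}{1+r} - 1$ and its numerator equals $N := \theta h(r) + 4 + \tfrac{r}{1+r}$; thus $\gamma_1 = N/D^{3/2}$, and the assumption \eqref{assth1} is precisely $D>0$.

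A short computation shows that $g$ and $h$ are positive and increasing on $(0,\infty)$, each vanishing to second order at the origin, with the two-sided asymptotics $g(x), h(x) = \tfrac12 x^2 + O(x^3)$ as $x\to 0^+$ and $g(x), h(x) = \log x + O(1)$ as $x\to\infty$. In particular $h(x)/g(x)$ extends to a bounded continuous function on $(0,\infty)$, tending to $1$ at both ends. I would first verify that $\theta g(r)\to\infty$ in each regime: this is immediate in Cases A and B, and in Case C1 it follows from $n^2/\theta\to\infty$ since $\theta g(r)\sim n^2/(2\theta)$ there. Once $\theta g(r)\to\infty$ is known, the bounded additive corrections in $D$ and $N$ are negligible, so $D\sim\theta g(r)$ and $N = \theta h(r) + O(1) = O(\theta g(r))$. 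Hence $\gamma_1 = N/D^{3/2} = O(1/\sqrt{\theta g(r)})$, which is exactly the first displayed line of the corollary because $\theta g(r) = \theta\{\log(1+n/\theta) - 1 + \theta/(n+\theta)\}$.

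Finally, I would read off the three stated rates from the size of $\theta g(r)$: in Case A, $r\to\infty$ gives $g(r)\sim\log(n/\theta)$, so $\theta g(r)\sim\theta\log(n/\theta)$; in Case B, $r\to c$ gives $g(r)\to g(c)>0$, so $\theta g(r)\asymp\theta$; and in Case C1, $r\to 0$ gives $g(r)\sim r^2/2$, so $\theta g(r)\sim n^2/(2\theta)$. Substituting into $O(1/\sqrt{\theta g(r)})$ produces the three cases. The only real obstacle is the elementary but careful verification that $g$ and $h$ have matching leading behaviour at both ends, so that $N$ and $D$ are of the same order and the bound collapses to $1/\sqrt{D}$; in particular the small-$x$ expansion, where the order-$x^0$ and order-$x^1$ terms of $h$ cancel and its leading coefficient turns out to coincide with that of $g$, is the one computation that demands attention.
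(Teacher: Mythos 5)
Your proposal is correct and follows essentially the same route as the paper: apply Theorem \ref{thm1}, note that the numerator main term $\theta h(n/\theta)$ is comparable to the denominator main term $\theta g(n/\theta)$ while the additive constants become negligible once $\theta g(n/\theta)\to\infty$ in Cases A, B, and C1, and then read off the three rates from the behaviour of $g$ at infinity, at $c$, and at $0$. The only cosmetic difference is that you verify the comparability of $g$ and $h$ directly by Taylor expansion and endpoint limits, whereas the paper delegates these asymptotic evaluations to Lemmas \ref{Lem31} and \ref{Lem32}.
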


Using Corollary \ref{cor1}, we can obtain the following convergence rate of the error bound for the normal approximation to $Y_{n,\theta}$.

\begin{cor}\label{cor2}
It holds that
\[ \| G_{n,\theta} - \Phi \|_\infty =
\begin{cases}
  O\left(1/\sqrt{\theta\log\left({n}/{\theta}\right)}\right) & {\rm (Case \ A), } \\
  O\left(1/\sqrt{\theta}\right) & {\rm (Case \ B), } \\
  O\left(1/\sqrt{n^2/\theta}\right) & {\rm (Case \ C1).}
\end{cases} 
\]
\end{cor}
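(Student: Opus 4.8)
The plan is to deduce Corollary \ref{cor2} from Corollary \ref{cor1} by comparing the two standardizations directly, since $Y_{n,\theta}$ differs from $X_{n,\theta}$ only through a replacement of the exact moments by their approximate counterparts. Setting $a = \sigma_T/\sigma_0$ and $b = (\mu_T-\mu_0)/\sigma_0$, a change of variables in the event $\{K \le \mu_T + \sigma_T x\}$ gives the exact identity
\[ G_{n,\theta}(x) = {\rm P}(K \le \mu_T + \sigma_T x) = F_{n,\theta}(ax+b) \qquad (x\in\mathbb{R}), \]
so that, by the triangle inequality,
\[ \| G_{n,\theta} - \Phi \|_\infty \le \| F_{n,\theta} - \Phi \|_\infty + \sup_{x\in\mathbb{R}} |\Phi(ax+b) - \Phi(x)|. \]
The first term is already bounded case by case by Corollary \ref{cor1}, so it remains to show that the second term is $O(1/\sigma_0)$, matching those same rates in each of Cases A, B, and C1.

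To estimate $\sup_x|\Phi(ax+b)-\Phi(x)|$ I would split it as $|\Phi(ax+b)-\Phi(ax)| + |\Phi(ax)-\Phi(x)|$. The first piece is at most $|b|\,\|\phi\|_\infty = |b|/\sqrt{2\pi}$ because $\Phi$ is Lipschitz with constant $1/\sqrt{2\pi}$, where $\phi$ denotes the standard normal density. For the second piece the mean value theorem gives $\Phi(ax)-\Phi(x) = (a-1)x\,\phi(\xi)$ with $\xi$ between $x$ and $ax$; bounding $|x|\phi(\xi)$ by $(\min(1,a))^{-1}\sup_t|t|\phi(t) = (\min(1,a))^{-1}e^{-1/2}/\sqrt{2\pi}$ (using that $|t|\phi(t)$ is bounded, which handles the growth in $x$) yields $\sup_x|\Phi(ax)-\Phi(x)| = O(|a-1|)$. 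The whole task thus reduces to controlling the discrepancies $|\mu_0-\mu_T|$ and $|\sigma_0^2-\sigma_T^2|$.

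The key observation driving the estimates is that $\mu_T$ and $\sigma_T^2$ are precisely the continuous (integral) counterparts of the discrete sums defining $\mu_0$ and $\sigma_0^2$: one has $\mu_T = \theta\int_0^n (\theta+x)^{-1}\,dx$ against $\mu_0 = \theta\sum_{j=0}^{n-1}(\theta+j)^{-1}$, and, writing $g(x)=x/(\theta+x)^2$, a direct integration shows $\sigma_T^2 = \theta\int_0^n g(x)\,dx$ against $\sigma_0^2 = \theta\sum_{j=0}^{n-1}g(j)$. Comparing each sum with its integral by a standard monotonicity/Euler--Maclaurin argument, the bracketed discrepancies are governed by the total variation of the summand: for the mean, $0 \le \sum_{j=0}^{n-1}(\theta+j)^{-1}-\int_0^n(\theta+x)^{-1}dx \le 1/\theta$, giving $|\mu_0-\mu_T|\le 1$; for the variance, since $g$ increases then decreases with maximum $g(\theta)=1/(4\theta)$, its total variation is $O(1/\theta)$, giving $|\sigma_0^2-\sigma_T^2|=O(1)$. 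Hence $|b| = |\mu_0-\mu_T|/\sigma_0 = O(1/\sigma_0)$, and, using $|a-1| = |\sigma_T^2-\sigma_0^2|/\{\sigma_0(\sigma_T+\sigma_0)\}$ together with $\sigma_0\sim\sigma_T$, also $|a-1| = O(1/\sigma_0^2)$.

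Combining these, $\sup_x|\Phi(ax+b)-\Phi(x)| = O(1/\sigma_0)+O(1/\sigma_0^2) = O(1/\sigma_0)$, which is exactly the order of $\|F_{n,\theta}-\Phi\|_\infty$ from Corollary \ref{cor1}; substituting the case-by-case evaluations of $\sigma_0^2\sim\sigma_T^2 = \theta\{\log(1+n/\theta)-1+\theta/(n+\theta)\}$, namely $\theta\log(n/\theta)$, $\theta$, and $n^2/\theta$ in Cases A, B, and C1, then produces the three claimed rates. The main obstacle I anticipate is not the algebra but making the sum-versus-integral comparisons rigorous and uniform across the three regimes: one must verify that $|\mu_0-\mu_T|$ and $|\sigma_0^2-\sigma_T^2|$ remain bounded (so that $a\to1$ and $\sigma_0\sim\sigma_T$ hold uniformly, legitimizing the divisions and the $\min(1,a)^{-1}$ factor) and confirm that the $b$-term, of order $1/\sigma_0$, genuinely dominates the $a$-term so that no regime loses a factor.
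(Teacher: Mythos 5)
Your proposal is correct and follows essentially the same route as the paper's own proof: the exact identity $G_{n,\theta}(x)=F_{n,\theta}\left(\frac{\sigma_T}{\sigma_0}x+\frac{\mu_T-\mu_0}{\sigma_0}\right)$, a triangle-inequality split into the $F$-versus-$\Phi$ error plus a shift term and a rescaling term, the Lipschitz bound $|b|/\sqrt{2\pi}$ and the $\sup_t|t|\phi(t)$-type bound (the paper's Lemma \ref{LemA2}), and the observations $|\mu_0-\mu_T|=O(1)$, $|\sigma_0^2-\sigma_T^2|=O(1)$ so that the shift term $O(1/\sigma_0)$ dominates. The only cosmetic difference is that you re-derive the moment discrepancies by a direct sum-versus-integral (total variation) argument, whereas the paper simply invokes its Lemma \ref{LemA} and Lemma \ref{Lem31}-(i), which encode the same comparisons.
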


\subsection{Evaluation of the decay rate}\label{ss22}

In this subsection, a lower bound for the error $\| F_{n,\theta} - \Phi \|_\infty$ is given in Theorem \ref{thm2}.
Together with Theorem \ref{thm1}, this theorem yields the decay rate of $\| F_{n,\theta} - \Phi \|_\infty$, as stated in Corollary \ref{cor3}.

We now present the second main theorem of this paper.

\begin{thm}\label{thm2}
(i) Assume that there exists $n_1(=n_1(\theta))$ such that, for all $n\geq n_1$, \eqref{assth1}, ${\rm var}(K) \geq 1$ and
\begin{equation}
\theta\left\{ \log\left(1+\frac{n}{\theta}\right) -2 + \frac{3\theta}{n+\theta} - \frac{\theta^2}{(n+\theta)^2} \right\} -3 + \frac{n}{2(n+\theta)} > 0. \label{assth2i}
\end{equation}
Then, it holds that
\[ \| F_{n,\theta} - \Phi \|_\infty 
\geq \frac{\gamma_2}{D} - \gamma_3
\]
for all $n\geq n_1$, where $D$ is some constant,
\begin{equation}\label{gam2}
\gamma_2 
= \frac{\theta\left\{ \log\left(1+\frac{n}{\theta}\right) -2 + \frac{3\theta}{n+\theta} - \frac{\theta^2}{(n+\theta)^2} \right\} -3 + \frac{n}{2(n+\theta)} }{\left\{  \theta \left( \log\left(1+\frac{n}{\theta}\right) - 1 + \frac{\theta}{n+\theta} \right) + \frac{n}{\theta+n}  \right\}^{3/2}} ,
\end{equation}
and
\begin{equation}
\gamma_3 
= \frac{\theta\left\{ \frac{1}{3} - \frac{\theta}{n+\theta} + \frac{\theta^2}{(n+\theta)^2} - \frac{\theta^3}{3(n+\theta)^3}  \right\} + 2}{ \left\{ \theta \left( \log\left(1+\frac{n}{\theta}\right) -1 + \frac{\theta}{n+\theta} \right) + \frac{n}{2(\theta+n)} - 1  \right\}^2  }.  \label{gam3}
\end{equation}

(ii) Assume that there exists $n_2(=n_2(\theta))$ such that, for all $n\geq n_2$, \eqref{assth1}, ${\rm var}(K) \geq 1$ and
\begin{equation}
\theta\left\{ \log\left(1+\frac{n}{\theta}\right) -2 + \frac{3\theta}{n+\theta} - \frac{\theta^2}{(n+\theta)^2} \right\} + 2 + \frac{n}{n+\theta} < 0. \label{assth2ii}
\end{equation}
Then, it holds that
\[ \| F_{n,\theta} - \Phi \|_\infty 
\geq \frac{\gamma_4}{D} - \gamma_3
\]
for all $n\geq n_2$, where $D$ is some constant, $\gamma_3$ is as defined in \eqref{gam3}, and
\begin{equation}\label{gam4}
\gamma_4 
= \frac{- \left[ \theta\left\{ \log\left(1+\frac{n}{\theta}\right) -2 + \frac{3\theta}{n+\theta} - \frac{\theta^2}{(n+\theta)^2} \right\} + 2 + \frac{n}{n+\theta} \right] }{\left\{  \theta \left( \log\left(1+\frac{n}{\theta}\right) - 1 + \frac{\theta}{n+\theta} \right) + \frac{n}{\theta+n}  \right\}^{3/2}} .
\end{equation}
\end{thm}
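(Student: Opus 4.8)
The proof rests on the classical Feller representation of $K$ as a sum of independent indicators, $K \stackrel{d}{=} \sum_{i=1}^{n}\xi_i$ with the $\xi_i$ independent and $\mathrm{P}(\xi_i=1)=p_i:=\theta/(\theta+i-1)$; this reproduces the moments $\mathrm{E}[K]$ and $\mathrm{var}(K)$ recorded in the Introduction. Consequently the third cumulant of $K$ is $\kappa_3=\sum_{i=1}^n p_i(1-p_i)(1-2p_i)$, and I will also need the fourth-order quantity $\Lambda_4:=\sum_{i=1}^n\{p_i(1-p_i)\}^2$. Writing $u=\theta+i-1$, one has $p_i(1-p_i)(1-2p_i)=\theta(u^{-1}-3\theta u^{-2}+2\theta^2 u^{-3})$ and $\{p_i(1-p_i)\}^2=\theta^2(u^{-2}-2\theta u^{-3}+\theta^2 u^{-4})$; comparing these sums with the integrals $\int_\theta^{\theta+n}(\cdots)\,du$ produces exactly the closed forms appearing in the numerators of $\gamma_2$, $\gamma_4$ (from $\kappa_3$) and $\gamma_3$ (from $\Lambda_4$), up to the explicit endpoint constants $-3+n/\{2(n+\theta)\}$ and $+2$. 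The sign of $\kappa_3$ drives the dichotomy between parts (i) and (ii): the leading coefficient of $\kappa_3/\theta$ is $\log(1+n/\theta)-2+3\theta/(n+\theta)-\theta^2/(n+\theta)^2$, which under $n/\theta\to c$ tends to the left-hand side of \eqref{caseB0}; thus the root $c^\star$ is precisely the value of $c$ at which the skewness vanishes, so $\kappa_3>0$ in Case A and in Case B$^\star$ with $c>c^\star$ (part (i), condition \eqref{assth2i}), and $\kappa_3<0$ in Case C1 and in Case B$^\star$ with $c<c^\star$ (part (ii), condition \eqref{assth2ii}).

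The lower bound itself comes from a one-term (skewness) Edgeworth expansion for the standardized lattice variable $X_{n,\theta}$,
\[
F_{n,\theta}(x)-\Phi(x)=-\frac{\lambda_3}{6}(x^2-1)\phi(x)+L_n(x)+R_n(x),\qquad \lambda_3=\frac{\kappa_3}{\sigma_0^3},
\]
where $L_n$ is the lattice (discreteness) term and $R_n$ the remainder. The function $(x^2-1)\phi(x)$ attains its minimum $-\phi(0)=-1/\sqrt{2\pi}$ at $x=0$, so the skewness term is most favorable there, giving $F_{n,\theta}(0)-\Phi(0)\approx \lambda_3/(6\sqrt{2\pi})$. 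In part (i), where $\kappa_3>0$, this yields $\|F_{n,\theta}-\Phi\|_\infty\ge F_{n,\theta}(0)-\Phi(0)\ge \lambda_3/(6\sqrt{2\pi})-|L_n|-|R_n|$; in part (ii), where $\kappa_3<0$, the same estimate applied to $\Phi(0)-F_{n,\theta}(0)$ gives the bound with $|\lambda_3|=-\lambda_3$. This is the origin of the constant $D$ (essentially $6\sqrt{2\pi}$, up to the constants absorbed below) and of the two-part structure of the theorem.

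It then remains to make the inequalities fully explicit. For the leading term I bound $\kappa_3$ from below (part (i)) or $-\kappa_3$ from below (part (ii)) by comparing the cumulant sum with its integral and keeping the endpoint corrections, and I bound $\sigma_0^3$ from above by $\{\sigma_T^2+n/(\theta+n)\}^{3/2}$; together these give $\lambda_3\ge\gamma_2$ (resp.\ $-\lambda_3\ge\gamma_4$), matching the denominators of \eqref{gam2} and \eqref{gam4}. For the correction I bound $|L_n|+|R_n|$ from above by a multiple of $\Lambda_4/\sigma_0^4$, bounding $\Lambda_4$ from above by its integral plus the constant $2$ and $\sigma_0^4$ from below by $\{\sigma_T^2+n/(2(\theta+n))-1\}^2$, which produces $\gamma_3$ as in \eqref{gam3}. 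The hypothesis $\mathrm{var}(K)\ge1$ guarantees $\sigma_0\ge1$ (so the lattice span $1/\sigma_0\le1$), while \eqref{assth1} keeps every denominator positive for $n\ge n_1$ (resp.\ $n\ge n_2$).

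The genuinely delicate step is the control of the lattice term $L_n$. Since $X_{n,\theta}$ is supported on a lattice of span $1/\sigma_0$, the expansion carries an oscillatory contribution of magnitude $O(1/\sigma_0)$, which in Case A is of the \emph{same} order $1/\sqrt{\theta\log(n/\theta)}$ as the skewness term itself, so a naive single-point evaluation at $x=0$ cannot be trusted. I would circumvent this by replacing pointwise evaluation with an averaged functional: choose a smooth, rapidly decaying weight $\psi$ with $\int(x^2-1)\phi(x)\psi(x)\,dx\neq0$ and use $\|F_{n,\theta}-\Phi\|_\infty\ge\bigl|\int(F_{n,\theta}-\Phi)\psi\bigr|/\int|\psi|$. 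Integrating the expansion against a smooth $\psi$ reduces the skewness term to a fixed nonzero multiple of $\lambda_3$, while the oscillation of $L_n$ integrates to order $1/\sigma_0^2$, small enough to be absorbed into the $\gamma_3$ remainder. Making this absorption quantitative, with a single explicit constant $D$ valid uniformly over the admissible $(n,\theta)$, is the main obstacle; everything else reduces to the careful but routine Euler--Maclaurin comparisons of the cumulant sums with their integrals.
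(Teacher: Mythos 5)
Your preparatory work matches the paper's: the Bernoulli representation \eqref{berrep}, the integral comparisons that produce the numerators and denominators of $\gamma_2$, $\gamma_3$, $\gamma_4$ (these are the paper's Lemmas \ref{Lem31}, \ref{Lem33}, \ref{Lem34}, all obtained via Lemma \ref{LemA}), and the observation that \eqref{assth2i}/\eqref{assth2ii} are exactly the positivity/negativity of the resulting bound on $\sum_i {\rm E}[(\xi_i-p_i)^3]$, with $c^\star$ the root where the limiting skewness changes sign. But there is a genuine gap at the core of your argument, and you name it yourself: the existence of a single constant $D$ making the lower bound hold uniformly over admissible $(n,\theta)$ is, in your words, ``the main obstacle,'' and your sketch does not close it. The one-term Edgeworth expansion for the non-identically distributed Bernoulli sum, with a remainder controlled by $\Lambda_4/\sigma_0^4$ \emph{uniformly}, is itself a theorem that must be proved, not assumed; the lattice term is of the same order as the skewness term (as you correctly note for Case A), so the pointwise evaluation at $x=0$ fails; and the smoothing/averaging device you propose to fix this is only an outline of how one might prove such a result. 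If you carried that program out, what you would have proved is essentially the reverse Berry--Esseen inequality of Hall and Barbour (1984) --- a published theorem in its own right, not a routine step.

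That cited inequality is precisely how the paper proceeds: it applies Lemma \ref{lemHB} as a black box. Setting $Y_i = (\xi_i - p_i)/\sigma_0$, the hypothesis ${\rm var}(K)\geq 1$ gives $|Y_i| < 1$, so every truncation indicator in the quantity $\delta$ of Lemma \ref{lemHB} is trivial; keeping only the third-moment part of $\delta$ and rearranging yields
\[
\| F_{n,\theta} - \Phi \|_\infty
\geq \frac{1}{D} \frac{ \left| \sum_{i=1}^n {\rm E}[ (\xi_i - p_i)^3] \right| }{ \left( \sum_{i=1}^n {\rm E}[ |\xi_i - p_i|^2]\right)^{3/2} }
- \frac{\sum_{i=1}^n \left( {\rm E}[ (\xi_i - p_i)^2]\right)^2}{\left( \sum_{i=1}^n {\rm E}[ |\xi_i - p_i|^2]\right)^{2}},
\]
with $D$ the universal Hall--Barbour constant, after which parts (i) and (ii) follow from exactly the moment bounds you already derived. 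So your computations and your intuition for why a bound of this shape must hold are sound, but to have a proof you should either invoke the Hall--Barbour inequality (which also explains the role of the assumption ${\rm var}(K)\geq1$) or genuinely establish your Edgeworth expansion with uniform remainder and lattice-term control --- a substantially harder undertaking than the ``routine'' label in your last paragraph suggests.
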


\begin{rem}
Under our asymptotic regime ($n^2/\theta\to\infty$), ${\rm var}(K) \geq 1$ is valid for sufficiently large $n$.
In {\rm Case A}, \eqref{assth2i} is valid for sufficiently large $n$. 
In {\rm Case B$^\star$}, if $c > c^\star$ then \eqref{assth2i} is valid for sufficiently large $n$, and if $c < c^\star$ then \eqref{assth2ii} is valid for sufficiently large $n$.
In {\rm Case C1}, \eqref{assth2ii} is valid for sufficiently large $n$.
\end{rem}

\begin{rem}
The constant $D$ in Theorem \ref{thm2} is the universal constant introduced by \cite{RefHB}.
Note that this constant was denoted as $C$ in their theorem.
\end{rem}

As a corollary to Theorems \ref{thm1} and \ref{thm2}, we can make the following statement regarding the decay rate of $\| F_{n,\theta} - \Phi \|_\infty$.

\begin{cor}\label{cor3}
It holds that
\[ \| F_{n,\theta} - \Phi \|_\infty \asymp
\begin{cases}
   1/\sqrt{\theta\log\left({n}/{\theta}\right)} & {\rm (Case \ A) }, \\
  1/\sqrt{\theta} & {\rm (Case \ B^\star) }, \\
   1/\sqrt{n^2/\theta} & {\rm (Case \ C1)}.
\end{cases} 
\]
\end{cor}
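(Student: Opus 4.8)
The plan is to prove the exact order $\asymp$ in each regime by matching an upper bound with a lower bound of the same order. The upper bound is already supplied by Corollary~\ref{cor1}, which gives precisely $O(1/\sqrt{\theta\log(n/\theta)})$, $O(1/\sqrt{\theta})$ and $O(1/\sqrt{n^2/\theta})$ in Cases~A, B and C1; since Case~B$^\star$ is a subcase of Case~B, the bound $O(1/\sqrt{\theta})$ applies there as well. Thus only the reverse inequalities remain, and for these I would invoke Theorem~\ref{thm2}. The whole argument reduces to asymptotic bookkeeping of the quantities $\gamma_2,\gamma_3,\gamma_4$.

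The central device is the quantity $M:=\theta\{\log(1+n/\theta)-1+\theta/(n+\theta)\}$, which (up to the bounded additive corrections $n/(2(\theta+n))-1$ and $n/(\theta+n)$ that appear in the denominators of $\gamma_2,\gamma_3,\gamma_4$) governs all of them. A Taylor expansion of $t\mapsto\log(1+t)-1+1/(1+t)$ shows it behaves like $t^2/2$ as $t\to0$ and like $\log t$ as $t\to\infty$; evaluating at $t=n/\theta$ gives $M\asymp\theta\log(n/\theta)$ in Case~A, $M\asymp\theta$ in Case~B and $M\asymp n^2/\theta$ in Case~C1. Each denominator is then $\asymp M^{3/2}$ for $\gamma_2,\gamma_4$ and $\asymp M^{2}$ for $\gamma_3$. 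For the lower bound I would follow the Remark after Theorem~\ref{thm2} to select the branch: part~(i) in Case~A and in Case~B$^\star$ with $c>c^\star$, and part~(ii) in Case~C1 and in Case~B$^\star$ with $c<c^\star$. Writing $t=n/\theta$, the numerators of $\gamma_2$ and $\gamma_4$ are $\theta H(t)+O(1)$ and $-\theta H(t)+O(1)$ respectively, with $H(t):=\log(1+t)-2+3/(1+t)-1/(1+t)^2$, whose unique positive root is exactly $c^\star$ by \eqref{caseB0}. Since $H(t)\sim-t^2/2<0$ near $0$ and $H(t)\sim\log t\to+\infty$, uniqueness of the root forces $H(c)<0$ on $(0,c^\star)$ and $H(c)>0$ on $(c^\star,\infty)$. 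Hence the relevant numerator is $\asymp\theta\log(n/\theta)$ in Case~A, $\asymp\theta$ in Case~B$^\star$ (this is where $c\neq c^\star$ is needed, to keep $H(c)\neq0$), and $\asymp n^2/\theta$ in Case~C1; dividing by $M^{3/2}$ yields $\gamma_2$ (or $\gamma_4$) $\asymp 1/\sqrt{\theta\log(n/\theta)},\ 1/\sqrt{\theta},\ 1/\sqrt{n^2/\theta}$, matching the three claimed orders.

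The main obstacle, and the only delicate point, is to show that the subtracted term $\gamma_3$ is asymptotically negligible, so that $\gamma_2/D-\gamma_3\asymp\gamma_2$ (and likewise with $\gamma_4$). The gain comes from the extra half-power: $\gamma_3$ carries $M^{2}$ in its denominator against $M^{3/2}$ for $\gamma_2,\gamma_4$, so $\gamma_3/\gamma_2$ acquires a factor $M^{-1/2}\to0$ times a ratio of numerators. The numerator of $\gamma_3$ is $\theta P(t)+2$ with $P(t):=1/3-1/(1+t)+1/(1+t)^2-1/(3(1+t)^3)$, and a Taylor expansion gives $P(t)\sim t^3/3$ as $t\to0$ and $P(t)\to1/3$ as $t\to\infty$. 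In Cases~A and B$^\star$ the numerator ratio is bounded (again using $c\neq c^\star$), so $\gamma_3/\gamma_2=O(M^{-1/2})\to0$. Case~C1 requires the most care, since there the numerator of $\gamma_3$ behaves like $n^3/(3\theta^2)$, which has no single order across $1<a<2$ when $\theta\asymp n^{a}$; nevertheless a direct comparison gives $\gamma_3/\gamma_4=O(\theta^{-1/2})+O((\theta/n^2)^{3/2})\to0$, so $\gamma_3$ is dominated in every regime.

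With $\gamma_3$ absorbed, Theorem~\ref{thm2} delivers lower bounds of orders $1/\sqrt{\theta\log(n/\theta)}$, $1/\sqrt{\theta}$ and $1/\sqrt{n^2/\theta}$ in Cases~A, B$^\star$ and C1, matching the upper bounds of Corollary~\ref{cor1}. Combining the two directions gives the three exact orders $\asymp$, completing the proof.
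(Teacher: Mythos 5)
Your proposal is correct and follows essentially the same route as the paper: the upper bound is taken from Corollary \ref{cor1}, the lower bound comes from Theorem \ref{thm2} with the branch (i)/(ii) chosen by the sign of $\log(1+c)-2+3/(c+1)-1/(c+1)^2$ relative to the root $c^\star$, and the conclusion rests on the same asymptotic bookkeeping showing $\gamma_2$ (or $\gamma_4$) $\asymp$ the target rate while $\gamma_3$ is of smaller order (in Case C1 via exactly the paper's comparison $\gamma_3/\gamma_4 = O(\theta^{-1/2}) + O((\theta/n^2)^{3/2})$). Your explicit Taylor expansions and the sign analysis of $H$ merely make rigorous what the paper delegates to Lemmas \ref{Lem31}--\ref{Lem34} and the Remark after Theorem \ref{thm2}, so there is no substantive difference.
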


\section{Some preliminary results}
\subsection{A representation of $K$ by a Bernoulli sequence}
Consider an independent Bernoulli random sequence $\{\xi_i \}_{i\geq1} (= \{ \xi_{i,\theta} \}_{i\geq1})$ defined by
\[ {\rm P}(\xi_i = 1) = p_i = \frac{\theta}{\theta+i-1}, \quad {\rm P}(\xi_i = 0) = 1 - p_i \quad (i=1,2,\ldots). \]
Then,
\begin{equation}  \label{berrep}
{\rm P}(K = x) = {\rm P} \left(\sum_{i=1}^n \xi_i = x \right) \quad (x=1,\ldots,n) ,
\end{equation}
that is, $\mathcal{L} (K)$ equals $\mathcal{L}(\sum_{i=1}^n \xi_i)$; see, e.g., \citet[equation (41.12)]{RefJKB} or \citet[Proposition 2.1]{RefS0}.
By virtue of this relation, and after some preparation, we will prove the results presented in Section \ref{sec:2}.
To use the Berry--Esseen-type theorem for independent random sequences (see Lemma \ref{lemTy}), we will evaluate the sum of the second- and third-order absolute central moments of $\{\xi_i\}_{i=1}^n$.
That is, we will evaluate
\begin{equation}\label{varK}
\sum_{i=1}^n {\rm E}[|\xi_i - p_i|^2] 
= \theta \sum_{i=1}^n \frac{1}{\theta+i-1} 
- \theta^2 \sum_{i=1}^n \frac{1}{(\theta+i-1)^2}  
\end{equation}
and
\begin{eqnarray}
\lefteqn{\sum_{i=1}^n {\rm E}[|\xi_i - p_i|^3]} \nonumber \\
&=& \theta \sum_{i=1}^n \frac{(i-1) \{ \theta^2 + (i-1)^2 \}}{(\theta+i-1)^4} \nonumber \\
&=&  \theta^3 \sum_{i=1}^n \frac{i-1}{(\theta+i-1)^4} +  \theta \sum_{i=1}^n \frac{(i-1)^3}{(\theta+i-1)^4}  \nonumber \\
&=& \theta \sum_{i=1}^n \frac{1}{\theta+i-1} 
- 3 \theta^2 \sum_{i=1}^n \frac{1}{(\theta+i-1)^2} 
+ 4 \theta^3 \sum_{i=1}^n \frac{1}{(\theta+i-1)^3} 
- 2 \theta^4 \sum_{i=1}^n \frac{1}{(\theta+i-1)^4}. \nonumber \\
\label{K3am}
\end{eqnarray}
To derive a lower bound result, we will evaluate
\begin{equation}
\sum_{i=1}^n {\rm E}[(\xi_i - p_i)^3]
= \theta \sum_{i=1}^n \frac{1}{\theta+i-1} 
- 3 \theta^2 \sum_{i=1}^n \frac{1}{(\theta+i-1)^2} 
+ 2 \theta^3 \sum_{i=1}^n \frac{1}{(\theta+i-1)^3}
\label{K3m}
\end{equation}
and
\begin{eqnarray}
\lefteqn{\sum_{i=1}^n \left( {\rm E}[|\xi_i - p_i|^2] \right)^2} \\
&=& \theta^2 \sum_{i=1}^n \frac{(i-1)^2}{(\theta+i-1)^4} \nonumber \\
&=& \theta^2 \sum_{i=1}^n \frac{1}{(\theta+i-1)^2} 
- 2 \theta^3 \sum_{i=1}^n \frac{1}{(\theta+i-1)^3}
+ \theta^4 \sum_{i=1}^n \frac{1}{(\theta+i-1)^4} .  \nonumber \\
\label{K2m2}
\end{eqnarray}

\begin{rem}
It follows from the binomial theorem that
\begin{eqnarray*}
\lefteqn{\sum_{i=1}^n {\rm E}\left[ (\xi_i - p_i)^m \right] }\\
&=& \sum_{j=1}^{m-1} (-1)^{j-1} \binom{m}{j-1} \sum_{i=1}^n \left( \frac{\theta}{\theta+i-1} \right)^j + (-1)^{m-1}(m-1) \sum_{i=1}^n \left( \frac{\theta}{\theta+i-1} \right)^m
\end{eqnarray*}
for any $m = 2,3,\ldots$.
\end{rem}

\subsection{Evaluations for moments}

In this subsection, we evaluate several sums of moments of $\{ \xi_i \}_{i=1}^n$.

\begin{lem}\label{Lem31}
(i) It holds that
\begin{eqnarray*}
\lefteqn{\theta \left( \log\left(1+\frac{n}{\theta}\right) -1 + \frac{\theta}{n+\theta} \right) + \frac{n}{2(\theta+n)} - 1} \\ 
&\leq& \sum_{i=1}^n {\rm E}[|\xi_i - p_i|^2]  \\
&\leq& \theta \left( \log\left(1+\frac{n}{\theta}\right) - 1 + \frac{\theta}{n+\theta} \right) + \frac{n}{\theta+n} . 
\end{eqnarray*}
(ii) If $n^2/\theta\to\infty$ then it holds that
\[ \sum_{i=1}^n {\rm E}[|\xi_i - p_i|^2]  \sim \theta \left( \log\left(1+\frac{n}{\theta}\right) - 1 + \frac{\theta}{n+\theta} \right). \]
(iii)
In particular, it holds that
\[ \sum_{i=1}^n {\rm E}[|\xi_i - p_i|^2]  \sim 
\begin{cases}
  \theta \log\left(\frac{n}{\theta}\right) & {\rm (Case \ A) }, \\
  \theta \left( \log\left(1+c \right) - 1 + \frac{1}{c+1} \right) & {\rm (Case \ B) }, \\
   \frac{n^2}{2\theta} & {\rm (Case \ C)}.
\end{cases} 
\]
\end{lem}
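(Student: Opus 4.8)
The plan is to work directly from the identity already recorded for the second absolute central moments, namely
\[
S := \sum_{i=1}^n {\rm E}[|\xi_i - p_i|^2] = \theta\,\Sigma_1 - \theta^2\,\Sigma_2, \qquad \Sigma_1 = \sum_{i=1}^n \frac{1}{\theta+i-1}, \quad \Sigma_2 = \sum_{i=1}^n \frac{1}{(\theta+i-1)^2},
\]
and to compare each of $\Sigma_1,\Sigma_2$ with the integral of its summand over $[\theta,\theta+n]$. The organising observation is that the asserted main term is precisely the integral approximation: since $\int_\theta^{\theta+n} x^{-1}\,dx = \log(1+n/\theta)$ and $\int_\theta^{\theta+n} x^{-2}\,dx = n/\{\theta(\theta+n)\}$, one checks $\theta\int_\theta^{\theta+n}x^{-1}\,dx - \theta^2\int_\theta^{\theta+n}x^{-2}\,dx = \theta(\log(1+n/\theta) - 1 + \theta/(n+\theta)) =: M$. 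Hence (i) reduces entirely to controlling the two Riemann-sum errors $\Sigma_1 - \int x^{-1}$ and $\Sigma_2 - \int x^{-2}$.

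For part (i) I would exploit that $x\mapsto x^{-1}$ and $x\mapsto x^{-2}$ are positive, decreasing and convex on $(0,\infty)$. Monotonicity yields, for a decreasing $f$, the telescoping sandwich $\int_\theta^{\theta+n} f \le \sum_{i=1}^n f(\theta+i-1) \le \int_\theta^{\theta+n} f + \{f(\theta)-f(\theta+n)\}$, the upper gap being the total boundary jump; convexity yields the sharper Hermite--Hadamard lower bound $\sum_{i=1}^n f(\theta+i-1) \ge \int_\theta^{\theta+n} f + \tfrac12\{f(\theta)-f(\theta+n)\}$. For the upper bound on $S$ I would insert the telescoping upper bound for $\Sigma_1$ and the plain integral lower bound for $\Sigma_2$; the boundary terms collapse and deliver exactly $M + n/(\theta+n)$. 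For the lower bound on $S$ I would instead combine the trapezoidal lower bound for $\Sigma_1$ with the telescoping upper bound for $\Sigma_2$, obtaining $M + n/\{2(\theta+n)\} - 1 + \theta^2/(\theta+n)^2$, and then discard the nonnegative tail $\theta^2/(\theta+n)^2$.

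The delicate point — and the main obstacle — is the lower bound: the crude left-Riemann inequality $\Sigma_1 \ge \int_\theta^{\theta+n} x^{-1}\,dx$ is too weak and loses the entire $n/\{2(\theta+n)\}$ contribution, so I must use the factor-$\tfrac12$ trapezoidal refinement coming from convexity of $x^{-1}$ rather than mere monotonicity. I would also note that all these inequalities are valid for every $\theta>0$ and every positive integer $n$ with no limiting hypothesis, which is exactly what (i) claims.

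For parts (ii) and (iii) I would feed (i) into an elementary squeeze. Both correction terms lie in $(-1,1)$, so $|S-M|<1$. Writing $t=n/\theta$ and $M=\theta\,\psi(t)$ with $\psi(t)=\log(1+t)-1+1/(1+t)$, I would record $\psi(0)=0$, $\psi'(t)=t/(1+t)^2>0$, and the two one-sided expansions $\psi(t)\sim t^2/2$ as $t\to0$ and $\psi(t)\sim\log t$ as $t\to\infty$. Under $n^2/\theta\to\infty$ these force $M\to\infty$ in every case — the only nontrivial verification being Case C1, where the first-order contributions cancel and the second-order expansion gives $M\sim\theta\,t^2/2 = n^2/(2\theta)\to\infty$ — whence $S/M\to1$ and (ii) follows. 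Finally (iii) is read off by substituting the case-wise asymptotics of $\psi$ into $M=\theta\,\psi(t)$: $\theta\log(n/\theta)$ when $t\to\infty$, the constant multiple $\theta(\log(1+c)-1+1/(c+1))$ when $t\to c$, and $n^2/(2\theta)$ when $t\to0$.
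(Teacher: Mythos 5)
Your proposal is correct and follows essentially the same route as the paper: the paper also starts from the identity $\sum_{i=1}^n {\rm E}[|\xi_i-p_i|^2]=\theta\Sigma_1-\theta^2\Sigma_2$ (its equation \eqref{varK}), bounds $\Sigma_1$ and $\Sigma_2$ by exactly the integral-comparison inequalities you derive (its Appendix Lemma \ref{LemA}, whose part (i) is precisely your trapezoidal/convexity refinement, cited there from Tsukuda (2017)), and then obtains (ii) and (iii) by the same squeeze, noting that the bounded correction terms are negligible once $\theta\{\log(1+n/\theta)-1+\theta/(n+\theta)\}\to\infty$, with the Taylor expansion at $0$ handling Case C. The only difference is that you re-prove the integral bounds directly instead of quoting them, which is immaterial.
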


\begin{proof}[Proof of Lemma \ref{Lem31}]
(i) The desired inequality is an immediate consequence of \eqref{varK} and Lemma~\ref{LemA}.
(ii) As 
\[\log\left(1+x \right) - 1 + \frac{1}{x+1} >0 \]
for any $x>0$, it holds that
\[ \theta \left\{ \log\left(1+\frac{n}{\theta}\right) - 1 + \frac{\theta}{n+\theta} \right\} \to \infty, \]
whereas the remainder does not diverge to $\pm\infty$.
This implies the assertion.
(iii) The assertion is a direct consequence of (ii) (for Case C, the result follows from the Taylor expansion of $\log\left(1+x \right) - 1 + 1/(x+1)$ as $x\to0$).
\end{proof}

\begin{lem}\label{Lem32}
(i) It holds that
\begin{eqnarray*}
\lefteqn{\theta\left\{ \log\left( 1+ \frac{n}{\theta} \right) - \frac{5}{3} + \frac{3\theta}{n+\theta} - \frac{2\theta^2}{(n+\theta)^2} + \frac{2\theta^3}{3(n+\theta)^3}  \right\} + \frac{n}{2(n+\theta)} -5 }\\
&\leq& \sum_{i=1}^n {\rm E}[|\xi_i - p_i|^3]   \\
&\leq& \theta\left\{ \log\left( 1+ \frac{n}{\theta} \right) - \frac{5}{3} + \frac{3\theta}{n+\theta} - \frac{2\theta^2}{(n+\theta)^2} + \frac{2\theta^3}{3(n+\theta)^3}  \right\} + 4 + \frac{n}{n+\theta}.
\end{eqnarray*}
(ii) If $n^2/\theta\to\infty$, then it holds that
\[ \sum_{i=1}^n {\rm E}[|\xi_i - p_i|^3]  
\sim \theta\left\{ \log\left( 1+ \frac{n}{\theta} \right) - \frac{5}{3} + \frac{3\theta}{n+\theta} - \frac{2\theta^2}{(n+\theta)^2} + \frac{2\theta^3}{3(n+\theta)^3}  \right\}. \]
(iii) In particular, it holds that
\[ \sum_{i=1}^n {\rm E}[|\xi_i - p_i|^3]  \sim 
\begin{cases}
  \theta \log\left(\frac{n}{\theta}\right) & {\rm (Case \ A) }, \\
  \theta \left\{ \log\left(1+c \right) - \frac{5}{3} + \frac{3}{c+1} - \frac{2}{(c+1)^2} + \frac{2}{3(c+1)^3} \right\} & {\rm (Case \ B) } , \\
   \frac{n^2}{2\theta} & {\rm (Case \ C)}.
\end{cases} 
\]
\end{lem}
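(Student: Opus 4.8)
The plan is to follow the template used in the proof of Lemma~\ref{Lem31}, starting from the exact identity \eqref{K3am}, which expresses $\sum_{i=1}^n {\rm E}[|\xi_i - p_i|^3]$ as a signed combination of the four power sums $S_k = \sum_{i=1}^n (\theta+i-1)^{-k}$ for $k=1,2,3,4$, namely $\theta S_1 - 3\theta^2 S_2 + 4\theta^3 S_3 - 2\theta^4 S_4$.

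For part (i), I would substitute into this identity the two-sided integral-comparison bounds of Lemma~\ref{LemA} applied to each $S_k$. The essential point is the bookkeeping of signs: since the coefficients alternate as $+\theta, -3\theta^2, +4\theta^3, -2\theta^4$, the overall upper bound is obtained by using the upper bounds for the positively weighted sums $S_1, S_3$ together with the lower bounds for the negatively weighted sums $S_2, S_4$, and the lower bound by reversing these choices. The leading part of each bound, coming from $\int_0^n (\theta+x)^{-k}\,dx$, contributes a term of order $\theta$; after scaling by $\theta^k$ and collecting the four contributions, these produce exactly the main bracketed expression $\theta\{\log(1+n/\theta) - 5/3 + 3\theta/(n+\theta) - 2\theta^2/(n+\theta)^2 + 2\theta^3/(3(n+\theta)^3)\}$. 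The boundary discrepancies between each sum and its integral are of order $\theta^{-k}$, so after multiplication by $\theta^k$ each is $O(1)$; summing them with the correct signs yields the bounded additive constants $-5$ (lower) and $4 + n/(n+\theta)$ (upper). I expect this sign-and-boundary bookkeeping to be the main obstacle, since it is the only place where the precise constants are pinned down.

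For part (ii), writing $x = n/\theta$ so that $\theta/(n+\theta) = 1/(1+x)$, the main bracketed term equals $\theta\,h(x)$ with $h(x) = \log(1+x) - 5/3 + 3/(1+x) - 2/(1+x)^2 + 2/(3(1+x)^3)$. I would verify that $h(x) > 0$ for all $x > 0$ by checking $h(0)=0$ and $h'(x)>0$ (with the substitution $u=1/(1+x)$ the derivative factors as $u(1-u)(2u^2-2u+1)$, which is positive on $0<u<1$), exactly in the spirit of Lemma~\ref{Lem31}(ii). Combined with $n^2/\theta\to\infty$ this forces $\theta\,h(n/\theta)\to\infty$, whereas the additive corrections from part (i) remain bounded; the asymptotic equivalence then follows since the negligible remainder divided by the diverging main term tends to zero.

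For part (iii), I would substitute the three regimes into the equivalent main term $\theta\,h(n/\theta)$. In Case A the logarithm dominates, giving $\theta\log(n/\theta)$; in Case B one evaluates $h$ at the limit $c$, producing $\theta\{\log(1+c) - 5/3 + 3/(c+1) - 2/(c+1)^2 + 2/(3(c+1)^3)\}$; and in Case C one Taylor-expands $h$ near $x=0$. A short computation gives $h(0)=0$, $h'(0)=0$ and $h''(0)=1$, so $h(x)\sim x^2/2$ as $x\to 0$, whence $\theta\,h(n/\theta)\sim n^2/(2\theta)$, matching the claimed rate.
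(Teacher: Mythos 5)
Your proposal is correct and follows essentially the same route as the paper: part (i) is exactly the paper's ``immediate consequence of \eqref{K3am} and Lemma~\ref{LemA}'' (with the sign bookkeeping you describe made explicit, and your constants $-5$ and $4+n/(n+\theta)$ check out), part (ii) matches the paper's argument via positivity of the bracketed function (your factorization $h'(x)=u(1-u)(2u^2-2u+1)$, $u=1/(1+x)$, is a valid way to verify it), and part (iii) matches the paper's deduction from (ii) with the Taylor expansion $h(x)\sim x^2/2$ for Case~C. The only difference is that you supply the computational details the paper leaves implicit.
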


\begin{proof}[Proof of Lemma \ref{Lem32}]
(i) The desired inequality is an immediate consequence of \eqref{K3am} and Lemma~\ref{LemA}.
(ii) As
\[ \log\left(1+x \right) - \frac{5}{3} + \frac{3}{x+1} - \frac{2}{(x+1)^2} + \frac{2}{3(x+1)^3} >0 \]
for any $x>0$, it holds that
\[ \theta\left\{ \log\left( 1+ \frac{n}{\theta} \right) - \frac{5}{3} + \frac{3\theta}{n+\theta} - \frac{2\theta^2}{(n+\theta)^2} + \frac{2\theta^3}{3(n+\theta)^3}  \right\} \to \infty, \]
whereas the remainder does not diverge to $\pm\infty$.
This implies the assertion.
(iii) The assertion is a direct consequence of (ii) (for Case C, the result follows from the Taylor expansion of $\log\left(1+x \right) - 5/3 + 3/(x+1) - 2/(x+1)^2 + 2/\{3(x+1)^3\}$ as $x\to0$).
\end{proof}

\begin{lem}\label{Lem33}
(i) It holds that
\begin{eqnarray*}
\lefteqn{\theta\left\{ \log\left( 1+ \frac{n}{\theta} \right) - 2 + \frac{3\theta}{n+\theta} - \frac{\theta^2}{(n+\theta)^2}  \right\} + \frac{n}{2(n+\theta)} - 3 }\\
&\leq& \sum_{i=1}^n {\rm E}[(\xi_i - p_i)^3]   \\
&\leq& \theta\left\{ \log\left( 1+ \frac{n}{\theta} \right) - 2 + \frac{3\theta}{n+\theta} - \frac{\theta^2}{(n+\theta)^2}  \right\} + 2 + \frac{n}{n+\theta}.
\end{eqnarray*}
(ii) In Case A, B$^\star$, or C, it holds that
\[ \sum_{i=1}^n {\rm E}[(\xi_i - p_i)^3]  \sim 
\begin{cases}
  \theta \log\left(\frac{n}{\theta}\right) & {\rm (Case \ A) ,} \\
  \theta \left\{ \log\left(1+c \right) - 2 + \frac{3}{c+1} - \frac{1}{(c+1)^2}  \right\} &  {\rm (Case \ B^\star),} \\
  -\frac{n^2}{2\theta} & {\rm (Case \ C).}
\end{cases} 
\]
\end{lem}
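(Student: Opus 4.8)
The plan is to mirror the proofs of Lemmas~\ref{Lem31} and \ref{Lem32}, the one genuinely new feature being that the leading coefficient can change sign, which is precisely what forces the passage from Case~B to Case~B$^\star$.

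For part (i), I would start from the exact identity \eqref{K3m}, which expresses $\sum_{i=1}^n {\rm E}[(\xi_i-p_i)^3]$ as the signed combination $\theta S_1 - 3\theta^2 S_2 + 2\theta^3 S_3$ of the power sums $S_k=\sum_{i=1}^n (\theta+i-1)^{-k}$. Applying the two-sided bounds of Lemma~\ref{LemA} to each $S_k$, with due attention to the signs of the coefficients — using the upper bounds for $S_1$ and $S_3$ together with the lower bound for $S_2$ to majorize, and the reverse choices to minorize — collapses the leading integral terms into $\theta\{\log(1+n/\theta)-2+3\theta/(n+\theta)-\theta^2/(n+\theta)^2\}$, while the accumulated endpoint terms supply the bounded remainders $\tfrac{n}{2(n+\theta)}-3$ and $2+\tfrac{n}{n+\theta}$. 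This is the same bookkeeping as in Lemmas~\ref{Lem31}(i) and \ref{Lem32}(i), so I expect no difficulty beyond tracking signs.

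For part (ii), write the leading term as $L=\theta\,g(n/\theta)$ with $g(x)=\log(1+x)-2+3/(x+1)-1/(x+1)^2$, so that part (i) says the deviation of $\sum_{i=1}^n{\rm E}[(\xi_i-p_i)^3]$ from $L$ is bounded in absolute value by $3$. The key reduction is that whenever $|L|\to\infty$ this bounded remainder is negligible, whence $\sum_{i=1}^n{\rm E}[(\xi_i-p_i)^3]\sim L$; it then remains only to check $|L|\to\infty$ and to simplify $L$ in each regime. In Case~A, $g(x)\to+\infty$ with $g(x)\sim\log x$ as $x\to\infty$, so $L\sim\theta\log(n/\theta)$. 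In Case~B$^\star$, continuity gives $g(n/\theta)\to g(c)$; since $c^\star$ is the \emph{unique} positive root of \eqref{caseB0}, which is exactly the equation $g=0$, and $c\neq c^\star$, we have $g(c)\neq0$, and because $n/\theta\to c$ with $n\to\infty$ forces $\theta\to\infty$ we obtain $L\to\pm\infty$ and $L\sim\theta g(c)$. In Case~C, the Taylor expansion $g(x)=-x^2/2+O(x^3)$ as $x\to0$ gives $L\sim-\theta(n/\theta)^2/2=-n^2/(2\theta)$ (using $n/\theta\to0$), and this tends to $-\infty$ under the standing assumption $n^2/\theta\to\infty$, so again the remainder drops out.

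The main obstacle, and the only place the argument departs from Lemmas~\ref{Lem31}--\ref{Lem32}, is the sign of $g$: in those lemmas the bracketed function was strictly positive on $(0,\infty)$, so the leading term automatically tended to $+\infty$, whereas here $g(0)=0$, $g$ is negative for small $x>0$, and $g$ is positive for large $x$, so $L$ may diverge to either $+\infty$ or $-\infty$. The delicate point is Case~B at $c=c^\star$: there $g(c^\star)=0$, so $L=\theta g(n/\theta)$ is an indeterminate product $\infty\cdot 0$ whose behavior depends on the rate at which $n/\theta\to c^\star$, and no clean asymptotic equivalent exists. Excluding this single value is exactly the content of Case~B$^\star$, and I would present this dichotomy as the crux of part (ii).
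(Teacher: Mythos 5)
Your proposal is correct and follows essentially the same route as the paper: part (i) is exactly the paper's argument (apply the two-sided bounds of Lemma~\ref{LemA} to the three power sums in \eqref{K3m}, with the sign bookkeeping you describe), and part (ii) is the paper's argument as well, namely that the leading term $\theta\,g(n/\theta)$ diverges in each regime --- using $g\sim\log x$ in Case A, $g(c)\neq0$ with $\theta\to\infty$ in Case B$^\star$, and $g(x)=-x^2/2+O(x^3)$ with $n^2/\theta\to\infty$ in Case C --- while the remainder from (i) stays bounded. Your identification of $c^\star$ as the unique positive root of $g=0$ and of the resulting indeterminacy at $c=c^\star$ is precisely why the paper restricts to Case B$^\star$ here.
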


\begin{proof}[Proof of Lemma \ref{Lem33}]
(i) The desired inequality is an immediate consequence of \eqref{K3m} and Lemma~\ref{LemA}.
(ii) In Case A, the assertion holds because 
\[ \theta\left\{ \log\left( 1+ \frac{n}{\theta} \right) - 2 + \frac{3\theta}{n+\theta} - \frac{\theta^2}{(n+\theta)^2}  \right\} 
\sim \theta \log\left( 1+ \frac{n}{\theta} \right) \to \infty, \]
whereas the remainder does not diverge to $\pm \infty$.
In Case B$^\star$, the assertion holds because
\[  \log\left( 1+ \frac{n}{\theta} \right) - 2 + \frac{3\theta}{n+\theta} - \frac{\theta^2}{(n+\theta)^2}  
\to \log\left(1+c \right) - 2 + \frac{3}{c+1} - \frac{1}{(c+1)^2} \neq 0 \]
and $\theta\to\infty$, whereas the remainder does not diverge to $\pm \infty$.
In Case C, the assertion holds because 
\[ \theta\left\{ \log\left( 1+ \frac{n}{\theta} \right) - 2 + \frac{3\theta}{n+\theta} - \frac{\theta^2}{(n+\theta)^2}  \right\} 
\sim \frac{-n^2}{2\theta}
\to -\infty, \]
whereas the remainder terms do not diverge to $\pm \infty$.
\end{proof}

\begin{lem}\label{Lem34}
It holds that
\begin{equation}
\sum_{i=1}^n \left( {\rm E}[|\xi_i - p_i|^2]\right)^2 
\leq \theta\left\{ \frac{1}{3} - \frac{\theta}{n+\theta} + \frac{\theta^2}{(n+\theta)^2} - \frac{\theta^3}{3(n+\theta)^3}  \right\} + 2.  \label{ev22}
\end{equation}
\end{lem}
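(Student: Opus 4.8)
The plan is to recognise the left-hand side of \eqref{ev22} as a sum of a single unimodal function and to compare it against the corresponding integral. Writing $x = i-1$ in \eqref{K2m2}, one has
\[
\sum_{i=1}^n \left({\rm E}[|\xi_i - p_i|^2]\right)^2 = \theta^2 \sum_{i=1}^n \frac{(i-1)^2}{(\theta+i-1)^4} = \sum_{j=0}^{n-1} h(j), \qquad h(x) := \frac{\theta^2 x^2}{(\theta+x)^4}.
\]
My first move is to identify the bracketed main term on the right-hand side of \eqref{ev22} as an integral of $h$. The substitution $u = \theta + x$ gives
\[
\int_0^n h(x)\,dx = \theta\left\{ \frac13 - \frac{\theta}{n+\theta} + \frac{\theta^2}{(n+\theta)^2} - \frac{\theta^3}{3(n+\theta)^3} \right\},
\]
so proving \eqref{ev22} reduces to the sum--integral comparison $\sum_{j=0}^{n-1} h(j) \le \int_0^n h(x)\,dx + 2$.

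Next I would record the shape of $h$. Since $h(0)=0$, $h\ge 0$, and
\[
h'(x) = \frac{2\theta^2 x(\theta - x)}{(\theta+x)^5},
\]
the function $h$ increases on $[0,\theta]$ and decreases on $[\theta,\infty)$, with a single global maximum $h(\theta) = \theta^4/(2\theta)^4 = 1/16$. This unimodality is the crux: on each unit interval lying entirely in the increasing region one has $h(j)\le\int_j^{j+1}h$, and on each unit interval lying entirely in the decreasing region one has $h(j)\le\int_{j-1}^j h$.

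Concretely, I would set $p = \lfloor\theta\rfloor$. For $j = 0,\dots,p-1$ the interval $[j,j+1]$ lies in $[0,\theta]$, whence $\sum_{j=0}^{p-1}h(j)\le\int_0^p h$; for $j = p+2,\dots,n-1$ the interval $[j-1,j]$ lies in $[\theta,\infty)$, whence $\sum_{j=p+2}^{n-1}h(j)\le\int_{p+1}^n h$. Adding these and discarding the nonnegative integral $\int_p^{p+1}h$ yields $\int_0^p h + \int_{p+1}^n h \le \int_0^n h$. The only remaining summands are $h(p)$ and $h(p+1)$, each of which is at most the global maximum $1/16 < 1$; hence their contribution is below $2$, which gives the stated bound (with plenty of room to spare).

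The main obstacle is purely bookkeeping at the two indices straddling the peak and in the degenerate regimes where the maximiser $x=\theta$ falls at or beyond the right end of the summation range (i.e.\ $\theta\ge n-1$); there $h$ is monotone increasing on all of $[0,n]$, the decreasing block is empty, fewer than two leftover terms appear, and the comparison only improves, so the bound continues to hold. Everything else---the integral evaluation and the per-interval monotonicity estimates---is routine, and one could alternatively derive \eqref{ev22} by inserting the individual power-sum estimates into \eqref{K2m2}, but the unimodal comparison above is cleaner and reproduces the main term exactly.
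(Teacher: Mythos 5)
Your proof is correct, and it takes a genuinely different route from the paper's. The paper proves Lemma \ref{Lem34} in one line by combining \eqref{K2m2} with Lemma \ref{LemA}: it expands the summand via $(i-1)^2/(\theta+i-1)^4 = 1/(\theta+i-1)^2 - 2\theta/(\theta+i-1)^3 + \theta^2/(\theta+i-1)^4$ and then applies the monotone sum--integral bounds of Lemma \ref{LemA}-(ii) with $k=1,2,3$ term by term (upper bounds for the two positively-signed sums, the lower bound for the negatively-signed one), which reproduces the right-hand side of \eqref{ev22} exactly, including the additive constant $2$. You instead keep the summand intact as the single unimodal function $h(x)=\theta^2x^2/(\theta+x)^4$, verify that $\int_0^n h$ equals the bracketed main term, and run the sum--integral comparison directly across the peak at $x=\theta$, paying at most $2\cdot h(\theta)=1/8$ for the two straddling terms; your edge-case discussion ($\lfloor\theta\rfloor \ge n-1$) is also handled correctly. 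What each approach buys: the paper's route is uniform with Lemmas \ref{Lem31}--\ref{Lem33}, which all reuse the same Lemma \ref{LemA} machinery, and requires no new analysis beyond sign bookkeeping; your route is self-contained, avoids the three-term decomposition, and shows that the constant $2$ in \eqref{ev22} is quite generous (your argument yields $1/8$), at the cost of the unimodality bookkeeping around the maximizer. Since the lemma as stated only needs the inequality with constant $2$, both arguments fully suffice.
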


\begin{proof}[Proof of Lemma \ref{Lem34}]
The assertion is an immediate consequence of \eqref{K2m2} and Lemma~\ref{LemA}.
\end{proof}

\begin{rem}
The asymptotic value of the RHS in \eqref{ev22} is given by
$\theta/3$ (Case A), \\ $\theta\left[ {1}/{3} - {1}/{(c+1)} + {1}/{(c+1)^2} - 1/\{3(c+1)^3\}  \right]$ (Case B), or ${n^3}/{(3\theta^2)} + 2$ (Case C).
\end{rem}

\section{Proofs of the results in Section \ref{sec:2}}
\label{sec:4}

\subsection{Proof of the results in Subsection \ref{ss21}}

In this subsection, we provide proofs of the results in Subsection \ref{ss21}.

\begin{proof}[Proof of Theorem \ref{thm1}]
Let $n$ be an arbitrary integer such that $n \geq n_0$.
From \eqref{berrep}, Lemma \ref{lemTy} yields that
\[ \| F_{n,\theta} - \Phi \|_\infty \leq C \frac{ \sum_{i=1}^n {\rm E}[|\xi_i - p_i|^3] }{(\sum_{i=1}^n {\rm E}[|\xi_i - p_i|^2])^{3/2}} , \]
where $C$ is the constant appearing in Lemma \ref{lemTy}.
Additionally, Lemmas \ref{Lem31}-(i) and \ref{Lem32}-(i) yield that
\[ \frac{ \sum_{i=1}^n {\rm E}[|\xi_i - p_i|^3] }{(\sum_{i=1}^n {\rm E}[|\xi_i - p_i|^2])^{3/2}} \leq \gamma_1 .\]
\end{proof}

\begin{proof}[Proof of Corollary \ref{cor1}]
In Case A, B, or C1, it holds that
\[
\theta \left\{ \log\left(1+\frac{n}{\theta}\right) - \frac{5}{3} + \frac{3\theta}{n+\theta} - \frac{2\theta^2}{(n+\theta)^2} + \frac{2\theta^3}{3(n+\theta)^3} \right\} 
\asymp
\theta \left( \log\left(1+\frac{n}{\theta}\right) - 1 + \frac{\theta}{n+\theta} \right) .
\]
Hence, Theorem \ref{thm1}, Lemmas \ref{Lem31} and \ref{Lem32} yield that
\[\gamma_1=O\left( \frac{1}{\sqrt{\theta \left\{ \log\left(1+n/\theta \right) - 1 + \theta/(n+\theta) \right\} }} \right). \]
This completes the proof.
\end{proof}

\begin{proof}[Proof of Corollary \ref{cor2}]
From
\[ G_{n,\theta}(x)=F_{n,\theta} \left(\frac{\sigma_T}{\sigma_0}x + \frac{\mu_T - \mu_0}{\sigma_0} \right) \quad (x\in\mathbb{R}) \]
and the triangle inequality, it follows that
\begin{eqnarray}
\| G_{n,\theta} - \Phi \|_\infty 
&=& \sup_{x\in \mathbb{R}} \left| F_{n,\theta} \left(\frac{\sigma_T}{\sigma_0}x + \frac{\mu_T - \mu_0}{\sigma_0} \right) - \Phi(x) \right| \nonumber \\
&\leq& \sup_{x\in \mathbb{R}} \left| F_{n,\theta} \left(\frac{\sigma_T}{\sigma_0}x + \frac{\mu_T - \mu_0}{\sigma_0} \right) - \Phi\left(\frac{\sigma_T}{\sigma_0}x + \frac{\mu_T - \mu_0}{\sigma_0} \right) \right| \nonumber \\
&&+ \sup_{x\in \mathbb{R}} \left| \Phi \left(\frac{\sigma_T}{\sigma_0}x + \frac{\mu_T - \mu_0}{\sigma_0} \right) - \Phi\left(\frac{\sigma_T}{\sigma_0}x  \right) \right| 
+ \sup_{x\in \mathbb{R}} \left|  \Phi\left(\frac{\sigma_T}{\sigma_0}x  \right) - \Phi\left(x  \right) \right| . \nonumber \\ \label{pc2t1}
\end{eqnarray}
The first term on the RHS in \eqref{pc2t1} is 
\[ O\left( \frac{1}{\sqrt{\theta\{\log(1+n/\theta) - 1 + \theta/(n+\theta) \}}} \right) \]
from Corollary \ref{cor1}.
The second term on the RHS in \eqref{pc2t1} is bounded above by
\[ \frac{1}{\sqrt{2\pi}} \frac{|\mu_T-\mu_0|}{\sigma_0} = O\left( \frac{1}{\sqrt{\theta \{ \log(1+n/\theta) - 1 + \theta/(n+\theta)\} }} \right) \]
from Lemma \ref{LemA2}-(i).
This is because $|\mu_T-\mu_0| = O(1)$ (Lemma \ref{LemA}) and $\sigma_0^2 \sim \theta (\log(1+n/\theta) - 1 + \theta/(n+\theta))$ (Lemma \ref{Lem31}).
The third term of the RHS in \eqref{pc2t1} is bounded above by
\[ 
\frac{1}{\sqrt{2\pi {\rm e}}} \max\left( \frac{\sigma_T}{\sigma_0} ,1 \right) \left| 1 -  \frac{\sigma_0}{\sigma_T} \right|
=O\left( \frac{1}{\theta \{ \log(1+n/\theta) - 1 + \theta/(n+\theta) \} } \right) \]
from Lemma \ref{LemA2}-(ii).
This is because, from $\sigma_0^2 \geq 1 \geq n/(n+\theta)$ for $n \geq n_1$ and
\[ \sigma_0^2 - \frac{n}{\theta+n} \leq \sigma_T^2 \leq \sigma_0^2 - \frac{n}{2(\theta+n)} +1 \]
(see Lemma \ref{Lem31}-(i)), it follows that
\begin{equation}
\left( 1- \frac{1}{\sigma_0^2} \frac{n}{\theta+n } \right)^{1/2}  
\leq \frac{\sigma_T}{\sigma_0} 
\leq \left[ 1 + \frac{1}{\sigma_0^2} \left\{1 - \frac{n}{2 (\theta+n) } \right\} \right]^{1/2}  \label{prno}
\end{equation}
for $n\geq n_1$.
Note that the LHS and RHS of \eqref{prno} are
\[ 1+ O \left( \frac{1}{\sigma_0^2} \right) 
= 1+ O\left( \frac{1}{\theta \{ \log(1+n/\theta) - 1 + \theta/(n+\theta) \}} \right) . \]
This completes the proof.
\end{proof}

\subsection{Proof of the results in Subsection \ref{ss22}}

In this subsection, we provide proofs of the results in Subsection \ref{ss22}.

\begin{proof}[Proof of Theorem \ref{thm2}]
(i) Let $n$ be an arbitrary integer such that $n \geq n_1$.
As $|\xi_i - p_i| <1 \leq {\rm var}(K)$ for all $i=1,\ldots,n$, \eqref{berrep} and Lemma \ref{lemHB} yield that
\begin{equation} \label{aplHB}
\| F_{n,\theta} - \Phi \|_\infty 
\geq \frac{1}{D} \frac{ \left| \sum_{i=1}^n {\rm E}[ (\xi_i - p_i)^3] \right| }{ \left( \sum_{i=1}^n {\rm E}[ |\xi_i - p_i|^2]\right)^{3/2} } 
- \frac{\sum_{i=1}^n \left( {\rm E}[ (\xi_i - p_i)^2]\right)^2}{\left( \sum_{i=1}^n {\rm E}[ |\xi_i - p_i|^2]\right)^{2}},  
\end{equation}
where $D$ is the constant appearing in Lemma \ref{lemHB}.
Additionally, Lemmas \ref{Lem31}-(i) and \ref{Lem33}-(i) yield that 
\[ \frac{ \sum_{i=1}^n {\rm E}[ (\xi_i - p_i)^3] }{ \left( \sum_{i=1}^n {\rm E}[ |\xi_i - p_i|^2]\right)^{3/2} } \geq \gamma_2 >0 .\]
Moreover, Lemmas \ref{Lem31}-(i) and \ref{Lem34}-(i) yield that 
\begin{equation} \label{aplL34}
 \frac{\sum_{i=1}^n \left( {\rm E}[ (\xi_i - p_i)^2]\right)^2}{\left( \sum_{i=1}^n {\rm E}[ |\xi_i - p_i|^2]\right)^{2}} \leq \gamma_3. 
\end{equation}
This completes the proof of (i).

(ii) Let $n$ be an arbitrary integer such that $n \geq n_2$.
From the same reason as (i), \eqref{berrep} and Lemma \ref{lemHB} yields \eqref{aplHB}.
Additionally, Lemmas \ref{Lem31}-(i) and \ref{Lem33}-(i) yield that 
\[ - \frac{ \sum_{i=1}^n {\rm E}[ (\xi_i - p_i)^3] }{ \left( \sum_{i=1}^n {\rm E}[ |\xi_i - p_i|^2]\right)^{3/2} } \geq  \gamma_4 > 0 .\]
Moreover, Lemmas \ref{Lem31}-(i) and \ref{Lem34}-(i) yield \eqref{aplL34}.
This completes the proof of (ii).
\end{proof}

\begin{proof}[Proof of Corollary \ref{cor3}]
In Case A, it follows from 
\[
\theta\left\{ \log\left( 1+ \frac{n}{\theta} \right) - 2 + \frac{3\theta}{n+\theta} - \frac{\theta^2}{(n+\theta)^2} \right\}
\sim \theta \left( \log\left(1+\frac{n}{\theta}\right) - 1 + \frac{\theta}{n+\theta} \right)  
\sim \theta \log\left(\frac{n}{\theta}\right) 
\]
that
\[ \gamma_2 = O\left( \frac{1}{\sqrt{\theta\log(n/\theta)}} \right) . \]
Moreover, it holds that
\[ \gamma_3 = O\left( \frac{1}{\theta (\log(1+n/\theta))^2} \right) .\]
Hence, Corollary \ref{cor1} and Theorem \ref{thm2} yield the desired result in Case A.

In Case B$^\star$, it follows from
\[
\theta \left| \log\left( 1+ \frac{n}{\theta} \right) - 2 + \frac{3\theta}{n+\theta} - \frac{\theta^2}{(n+\theta)^2}  \right| 
\asymp \theta \left( \log\left(1+\frac{n}{\theta}\right) - 1 + \frac{\theta}{n+\theta} \right)
\asymp \theta 
\]
that
\[ \gamma_2 \sim \gamma_4 = O\left( \frac{1}{\sqrt{\theta}} \right). \]
Moreover, it holds that
\[ \gamma_3 = O\left( \frac{1}{\theta} \right) .\]
As 
\[ \theta \left| \log\left( 1+ \frac{n}{\theta} \right) - 2 + \frac{3\theta}{n+\theta} - \frac{\theta^2}{(n+\theta)^2}  \right| \to \infty ,\]
either $n_2$ or $n_3$ exist in Case B$^\star$.
Hence, Corollary \ref{cor1} and Theorem \ref{thm2} yields the desired result in Case B$^\star$.

In Case C1, it follows from 
\[
- \theta\left\{ \log\left( 1+ \frac{n}{\theta} \right) - 2 + \frac{3\theta}{n+\theta} - \frac{\theta^2}{(n+\theta)^2}  \right\} 
\sim \theta \left( \log\left(1+\frac{n}{\theta}\right) - 1 + \frac{\theta}{n+\theta} \right) 
\sim \frac{n^2}{2 \theta}
\]
that
\[ \gamma_4 = O\left( \frac{1}{ \sqrt{n^2/\theta} } \right) . \]
Moreover, it holds that
\[ \gamma_3 \sim \frac{\frac{n^3}{3\theta^2} + 2}{\frac{n^4}{4\theta^2}} = O\left( \frac{1}{n} + \frac{1}{n^4/\theta^2}  \right) . \]
Hence, Corollary \ref{cor1} and Theorem \ref{thm2} yield the desired result in Case C1.

This completes the proof.
\end{proof}

\section{Concluding remarks}

In this paper, we evaluated the approximation errors $\| F_{n,\theta} - \Phi \|_\infty $ and $\| G_{n,\theta} - \Phi \|_\infty $.
Deriving decay rates for $\| F_{n,\theta} - \Phi \|_\infty $ when $n/\theta \to c^\star$ (i.e., Case B with $c=c^\star$) and for $\| G_{n,\theta} - \Phi \|_\infty $ is left for future research.
Moreover, as normal approximations are refined by the Edgeworth expansion, it is also important to derive the Edgeworth expansion under our asymptotic regimes.


\appendix
\section{Some evaluations}

The following lemma is used in the main body.

\begin{lem}\label{LemA}
Let $\theta$ be a positive value and $n$ be a positive integer.
(i) It holds that
\[ \log\left(1+\frac{n}{\theta}\right) + \frac{n}{2\theta(n+\theta)} 
\leq \sum_{i=1}^n \frac{1}{\theta+i-1} 
\leq \log\left(1+\frac{n}{\theta}\right) + \frac{n}{\theta(n+\theta)} \]
(ii) It holds that
\[ \frac{1}{k \theta^k} - \frac{1}{k (n+\theta)^k}
\leq \sum_{i=1}^n \frac{1}{(\theta+i-1)^{k+1}} 
\leq \frac{1}{\theta^{k+1}} + \frac{1}{k \theta^k} - \frac{1}{k (n+\theta)^k} \]
for any positive integer $k$.
\end{lem}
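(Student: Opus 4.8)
The plan is to treat all four bounds as integral comparisons for the single family of functions $f_k(x) = x^{-(k+1)}$, each of which is positive, strictly decreasing, and convex on $(0,\infty)$, and whose antiderivatives ($\log x$ for $k=0$, and $-1/(kx^k)$ otherwise) produce precisely the logarithmic and power expressions in the statement. Writing the sums as $\sum_{i=1}^n f_k(\theta+i-1)$, I would compare each summand against the integral of $f_k$ over an adjacent unit interval.

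For part (ii) mere monotonicity suffices. Since $f_k$ is decreasing, the left-endpoint value dominates the integral over each interval, $f_k(\theta+i-1) \ge \int_{\theta+i-1}^{\theta+i} f_k(x)\,dx$, and summing over $i=1,\ldots,n$ gives the lower bound $\int_\theta^{\theta+n} f_k = \frac{1}{k\theta^k} - \frac{1}{k(n+\theta)^k}$. For the upper bound I would peel off the first term $f_k(\theta)=\theta^{-(k+1)}$ and apply the reverse comparison $f_k(\theta+i-1) \le \int_{\theta+i-2}^{\theta+i-1} f_k$ to the remaining $n-1$ terms; this yields $\theta^{-(k+1)} + \int_\theta^{\theta+n-1} f_k \le \theta^{-(k+1)} + \int_\theta^{\theta+n} f_k$ after discarding the nonnegative tail, and evaluating the integral reproduces the stated bound.

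Part (i) is the delicate case, because the factor $\tfrac12$ in the lower bound cannot arise from monotonicity alone. Here I would instead exploit convexity of $f_0(x)=1/x$ through the trapezoidal inequality $\int_{\theta+i-1}^{\theta+i} f_0 \le \tfrac12\{f_0(\theta+i-1)+f_0(\theta+i)\}$. Summing over $i$ and observing that the interior terms reassemble the full sum $S=\sum_{i=1}^n f_0(\theta+i-1)$ while only the half-weighted endpoints survive, one gets $S \ge \log(1+n/\theta) + \tfrac12\{f_0(\theta)-f_0(\theta+n)\}$, and the boundary term simplifies to $\tfrac12(\tfrac1\theta-\tfrac1{\theta+n}) = \frac{n}{2\theta(n+\theta)}$. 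The upper bound I would obtain exactly as in (ii), peeling off $f_0(\theta)=1/\theta$ and bounding the rest by $\int_\theta^{\theta+n-1} f_0 = \log\frac{\theta+n-1}{\theta}$; comparing with the target then reduces to the elementary estimate $\log(1-u)\le -u$ evaluated at $u=1/(n+\theta)$.

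The main obstacle is thus confined to part (i): one must recognize that the refined lower bound forces the use of convexity (the trapezoidal rule) rather than the plain monotone comparison used everywhere else, and keep careful track of the telescoping boundary contributions so that they collapse to exactly $n/(2\theta(n+\theta))$. The remaining work is the routine evaluation of $\int x^{-(k+1)}\,dx$ together with the single inequality $\log(1-u)\le -u$, neither of which presents any difficulty.
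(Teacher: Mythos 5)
Your proposal is correct, and it splits naturally into two parts relative to the paper. For part (ii) your argument is exactly the paper's: the paper establishes the chain $\int_{\theta}^{n+\theta} x^{-(k+1)}\,dx \leq \sum_{i=1}^n (\theta+i-1)^{-(k+1)} \leq \theta^{-(k+1)} + \int_{\theta}^{n+\theta} x^{-(k+1)}\,dx$ by the same monotone endpoint-versus-integral comparison, then evaluates the integral. For part (i), however, the paper gives no self-contained argument at all; it simply cites the proof of Proposition 1 in Tsukuda (2017). Your trapezoidal argument fills that gap: convexity of $1/x$ gives $\int_{\theta+i-1}^{\theta+i} \frac{dx}{x} \leq \frac{1}{2}\left\{\frac{1}{\theta+i-1}+\frac{1}{\theta+i}\right\}$, summing telescopes the half-weighted endpoints into $S - \frac{1}{2}\left(\frac{1}{\theta}-\frac{1}{\theta+n}\right)$, and the boundary term is precisely $\frac{n}{2\theta(n+\theta)}$; the upper bound then closes via $\log(1-u)\leq -u$ at $u = 1/(n+\theta)$, which is the right reduction (and it degenerates gracefully when $n=1$, where the peeled sum is empty). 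I verified both computations, including the sign of the trapezoidal inequality (a convex function lies below its chords, so the trapezoid overestimates the integral) and the algebra reducing the upper bound to $\log\bigl(1-\frac{1}{n+\theta}\bigr) \leq -\frac{1}{n+\theta}$. What your route buys is self-containedness: the lemma no longer depends on an external reference, and the convexity mechanism makes transparent where the factor $\frac{1}{2}$ in the lower bound comes from. What the paper's route buys is only brevity.
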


\begin{proof}
For (i), see \citet[Proof of Proposition 1]{RefT}.
For (ii), the conclusion follows from
\[ \int_{\theta}^{n+\theta} \frac{dx}{x^{k+1}}
\leq \sum_{i=1}^n \frac{1}{(\theta+i-1)^{k+1}} 
\leq \frac{1}{\theta^{k+1}} + \int_{\theta}^{n+\theta} \frac{dx}{x^{k+1}} \]
for any positive integer $k$.
This completes the proof.
\end{proof}

The next lemma provides some basic results on the standard normal distribution function.

\begin{lem}\label{LemA2}
(i) For any $\alpha (\in\mathbb{R})$, it holds that
\[ \sup_{x\in\mathbb{R}} |\Phi(x+\alpha) - \Phi (x)| \leq \frac{|\alpha|}{\sqrt{2\pi}} .\]
(ii)  For any positive $\beta (\in\mathbb{R})$, it holds that
\[ \sup_{x\in\mathbb{R}} |\Phi(\beta x) - \Phi (x)| 
\leq
\max (\beta,1) \frac{\left| 1 - 1/\beta \right|}{\sqrt{2\pi {\rm e}}}.
\]
\end{lem}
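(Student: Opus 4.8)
The plan is to handle both parts with the mean value theorem applied to $\Phi$, combined with two elementary facts about the standard normal density $\varphi(x) = \Phi'(x) = (2\pi)^{-1/2}e^{-x^2/2}$: it attains its global maximum $\varphi(0) = 1/\sqrt{2\pi}$ at the origin, and the map $x \mapsto x\varphi(x)$ attains its maximum over $[0,\infty)$ at $x=1$, with value $\varphi(1) = 1/\sqrt{2\pi\mathrm{e}}$ (found by setting $\frac{d}{dx}\{x\varphi(x)\} = \varphi(x)(1-x^2)=0$). For part (i) this is essentially immediate: I would write $\Phi(x+\alpha) - \Phi(x) = \alpha\varphi(\xi)$ for some $\xi$ between $x$ and $x+\alpha$, bound $\varphi(\xi) \leq \varphi(0) = 1/\sqrt{2\pi}$ uniformly in $\xi$, and take the supremum over $x$.

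For part (ii) I would first reduce to $x \geq 0$. Setting $g(x) = \Phi(\beta x) - \Phi(x)$, the identity $\Phi(-y) = 1-\Phi(y)$ shows $g$ is odd, so $\sup_{x\in\mathbb{R}}|g(x)| = \sup_{x\geq 0}|g(x)|$. For $x \geq 0$ the mean value theorem gives $g(x) = (\beta-1)x\,\varphi(\xi)$ for some $\xi$ strictly between $x$ and $\beta x$. Since $\varphi$ is decreasing on $[0,\infty)$ and $\xi \geq \min(x,\beta x) = x\min(1,\beta)$, I would bound $\varphi(\xi) \leq \varphi\bigl(x\min(1,\beta)\bigr)$, so that $|g(x)| \leq |\beta-1|\,x\,\varphi\bigl(x\min(1,\beta)\bigr)$.

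The final step is the change of variable $u = x\min(1,\beta)$, which rewrites the right-hand side as $\tfrac{|\beta-1|}{\min(1,\beta)}\,u\varphi(u)$; taking the supremum over $u \geq 0$ yields $\tfrac{|\beta-1|}{\min(1,\beta)}\cdot\tfrac{1}{\sqrt{2\pi\mathrm{e}}}$. I would then close the argument with the algebraic identity $\tfrac{|\beta-1|}{\min(1,\beta)} = \max(\beta,1)\,|1-1/\beta|$, checked by splitting into the cases $\beta \geq 1$ (where both sides equal $\beta-1$) and $0<\beta<1$ (where both equal $(1-\beta)/\beta$).

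The computations are routine, and the only point requiring genuine care is the asymmetry introduced by the scaling $\beta$. One must bound $\varphi(\xi)$ by its value at the \emph{smaller} of the two endpoints $x$ and $\beta x$, namely $x\min(1,\beta)$, rather than naively at $x$; this is precisely what makes the rescaled variable $u$ land the maximizer of $u\varphi(u)$ at $u=1$, producing both the sharp constant $1/\sqrt{2\pi\mathrm{e}}$ and the correct $\beta$-dependent prefactor. Treating $\beta > 1$ and $\beta < 1$ uniformly through $\min(1,\beta)$ and the closing identity is what delivers the stated bound, tight in both regimes.
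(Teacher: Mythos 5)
Your proof is correct and rests on the same two ingredients as the paper's: a first-order bound on the increment of $\Phi$ with the density evaluated at the endpoint of the interval nearer the origin, and the fact that $x\varphi(x)$ attains its maximum $1/\sqrt{2\pi\mathrm{e}}$ at $x=1$ (the paper phrases this as $\sup_{x>0}(-\phi'(x))$, which is the same quantity since $\phi'(x)=-x\phi(x)$). The only difference is organizational: your oddness reduction $\sup_{\mathbb{R}}|g|=\sup_{x\geq 0}|g|$ and the substitution $u=x\min(1,\beta)$, together with the closing identity $|\beta-1|/\min(1,\beta)=\max(\beta,1)\,|1-1/\beta|$, compress into a single computation what the paper carries out as four separate cases ($\beta\geq 1$ versus $0<\beta\leq 1$, and $x>0$ versus $x<0$).
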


\begin{proof}
(i) For some $\delta$ between 0 and $\alpha$, it holds that
\[ | \Phi(x+\alpha) - \Phi(x) | = \phi(x+\delta) |\alpha|  \leq \sup_{x\in\mathbb{R}} \phi(x) |\alpha| = \frac{|\alpha|}{\sqrt{2\pi}}. \]

(ii) As
\[ \max (\beta,1) \frac{\left| 1 - 1/\beta \right|}{\sqrt{2\pi {\rm e}}} =
\begin{cases}
(\beta-1)/{\sqrt{2\pi {\rm e}}} & (\beta\geq1) \\
(1/\beta - 1)/{\sqrt{2\pi {\rm e}}} & (0<\beta\leq1),
\end{cases}\]
we prove the assertion for $\beta\geq1$ and $\beta\leq1$, separately.
First, we consider the case $\beta \geq 1$.
For $x=0$, it holds that $|\Phi(\beta x) - \Phi(x)|=0$.
For $x>0$,
\begin{eqnarray*}
\lefteqn{|\Phi(\beta x) - \Phi(x)| 
= \int_x^{\beta x} \phi(t) dt} \\
&\leq& \phi(x) (\beta-1)x 
= -\phi'(x) (\beta-1) 
\leq \sup_{x>0}(-\phi'(x)) (\beta-1)
= \frac{\beta-1}{\sqrt{2\pi {\rm e}}} .
\end{eqnarray*}
For $x<0$,
\begin{eqnarray*}
\lefteqn{|\Phi(\beta x) - \Phi(x)| 
= \int_{\beta x}^x \phi(t) dt} \\
&\leq& \phi(x) (1- \beta)x 
= \phi'(x) (\beta-1) 
\leq  \sup_{x < 0}(\phi'(x)) (\beta-1)
= \frac{\beta -1 }{\sqrt{2\pi {\rm e}}} .
\end{eqnarray*}
Next, we consider the case $(0< ) \beta \leq 1$.
For $x=0$, it holds that $|\Phi(\beta x) - \Phi(x)|=0$.
For $x>0$,
\begin{eqnarray*}
\lefteqn{|\Phi(\beta x) - \Phi(x)| 
= \int_{\beta x}^x \phi(t) dt} \\
&\leq& \phi(\beta x) (1- \beta)x 
= - \phi'(\beta x) \left(\frac{1}{\beta} -1\right) 
\leq \sup_{x> 0}(- \phi'(x)) \left(\frac{1}{\beta} -1\right)
= \frac{1/\beta -1 }{\sqrt{2\pi {\rm e}}} .
\end{eqnarray*}
For $x<0$,
\begin{eqnarray*}
\lefteqn{ |\Phi(\beta x) - \Phi(x)| 
= \int_x^{\beta x} \phi(t) dt} \\
&\leq& \phi(\beta x) (\beta-1)x 
= \phi'(\beta x) \left(\frac{1}{\beta} -1 \right)
\leq \sup_{x<0}(\phi'(x)) \left(\frac{1}{\beta} -1 \right) 
= \frac{1/\beta-1}{\sqrt{2\pi {\rm e}}} .
\end{eqnarray*}

This completes the proof.
\end{proof}

\section{Error bounds for normal approximations}
\subsection{The Berry--Esseen-type theorem for independent sequences}
In this subsection, we introduce the Berry--Esseen-type theorem for independent sequences.
For further details, see \cite{RefTy}.

Let $\{ X_i\}_{i\geq1}$ be a sequence of independent random variables, and ${\rm E}[X_i]=0$, ${\rm E}[X_i^2]=\sigma_i^2 (>0)$, ${\rm E}[|X_i|^3]=\beta_i <\infty$ for all $i=1,2,\ldots$.
The quantity
$ \varepsilon_n = {\sum_{i=1}^n \beta_i}/{(\sum_{i=1}^n \sigma_i^2)^{3/2}} $
is called the Lyapunov fraction.
We denote the distribution function of 
$ {\sum_{i=1}^n X_i}/{(\sum_{i=1}^n \sigma_i^2)^{1/2}} $
by $F^X_n$.
Then, the following result holds.

\begin{lem}[\cite{RefTy}]\label{lemTy}
There exists a universal constant $C$ such that
\[ \| F^X_n - \Phi \|_\infty \leq C \varepsilon_n \]
for all positive integers $n$, where $C$ does not exceed $0.5591$.
\end{lem}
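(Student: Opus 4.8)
The plan is to prove this by the classical Fourier-analytic route of Esseen, since the sharp universal constant recorded here is obtained by optimizing precisely that argument. After rescaling so that $\sum_{i=1}^n \sigma_i^2 = 1$ (which leaves both $\|F^X_n - \Phi\|_\infty$ and the scale-invariant quantity $\varepsilon_n$ unchanged), the standardized sum $W_n = \sum_{i=1}^n X_i$ has characteristic function $f_n(t) = \prod_{i=1}^n \phi_i(t)$, where $\phi_i$ denotes the characteristic function of $X_i$, and the target is the standard normal characteristic function $\mathrm{e}^{-t^2/2}$. First I would invoke Esseen's smoothing inequality, which bounds the Kolmogorov distance to $\Phi$ by an integral of the difference of characteristic functions,
\[ \| F^X_n - \Phi \|_\infty \leq \frac{1}{\pi} \int_{-T}^{T} \left| \frac{f_n(t) - \mathrm{e}^{-t^2/2}}{t} \right| \, dt + \frac{C'}{T}, \]
where $T>0$ is a free parameter and $C'$ arises from $\sup_x \phi(x) = 1/\sqrt{2\pi}$; eventually $T$ is chosen of order $1/\varepsilon_n$.

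The key step is a pointwise estimate of $|f_n(t) - \mathrm{e}^{-t^2/2}|$ on the central range $|t| \leq T$. For this I would expand each factor as $\phi_i(t) = 1 - \sigma_i^2 t^2/2 + r_i(t)$, using the elementary inequality $|\mathrm{e}^{\mathrm{i}tx} - 1 - \mathrm{i}tx + t^2x^2/2| \leq |t|^3|x|^3/6$ to control the remainder by $|r_i(t)| \leq \beta_i |t|^3/6$. Taking logarithms (legitimate once $|t|$ is small enough that $|\phi_i(t)-1| < 1$ for every $i$) and summing over $i$, the quadratic terms reproduce $-t^2/2$ because $\sum_i \sigma_i^2 = 1$, while the accumulated error is governed by $\sum_i \beta_i |t|^3 = \varepsilon_n |t|^3$. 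This yields an estimate of the shape $|f_n(t) - \mathrm{e}^{-t^2/2}| \leq (\mathrm{const})\,\varepsilon_n |t|^3 \mathrm{e}^{-t^2/4}$ on the central range, with a separate, cruder argument needed to keep the integrand under control for the remaining $|t|$ up to $T \asymp 1/\varepsilon_n$.

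Substituting this estimate back into the smoothing inequality and integrating in $t$ then produces a bound of the form $C\varepsilon_n$: after dividing by $|t|$, the characteristic-function integral contributes a term proportional to $\varepsilon_n \int |t|^2 \mathrm{e}^{-t^2/4}\,dt = O(\varepsilon_n)$, and the tail term $C'/T$ contributes $O(\varepsilon_n)$ once $T \asymp 1/\varepsilon_n$. Balancing the two contributions through the choice of $T$ delivers the qualitative assertion $\|F^X_n - \Phi\|_\infty = O(\varepsilon_n)$ routinely.

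The hard part is not the existence of a universal constant but pinning it down to the explicit value $0.5591$. Achieving such a sharp constant in the non-identically-distributed setting requires substantially refined versions of every step above — optimal remainder inequalities for the characteristic functions, a sharpened form of the smoothing inequality, a careful split of the integration range, and numerical optimization of the free parameters. This refinement is exactly the content of the cited analysis of \cite{RefTy}, so for the stated value of $C$ I would defer to that work rather than reproduce the optimization here.
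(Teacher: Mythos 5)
The paper does not prove this lemma at all: it is imported verbatim from \cite{RefTy}, whose entire content is the existence of the universal constant together with the numerical bound $0.5591$, and the paper simply cites that work. Your proposal --- a sketch of the classical Esseen smoothing argument for the qualitative $O(\varepsilon_n)$ bound, with explicit deferral to the cited reference for the sharp constant --- is therefore consistent with, and indeed more detailed than, the paper's own treatment.
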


\begin{rem}
Here, we introduce the result given by \cite{RefTy}.
There have been many studies in which Berry--Esseen-type results are derived; see, e.g., \citet[Chapter 3]{RefCGS}.
\end{rem}

\subsection{Lower bound}
In this subsection, we introduce the result given by \cite{RefHB} that considers reversing the Berry--Esseen inequality.

Let $\{ Y_i\}_{i\geq1}$ be a sequence of independent random variables satisfying ${\rm E}[Y_i]=0$ and ${\rm E}[Y_i^2]=\sigma_i^2 (>0)$ for all $i$, and $\sum_{i=1}^n \sigma_i^2 =1$.
We denote the distribution function of $\sum_{i=1}^n Y_i $ by $F^Y_n$.
Letting 
\[\delta = \sum_{i=1}^n {\rm E}[Y_i^2 \mathbb{I} \{|Y_i| >1\}] + \sum_{i=1}^n {\rm E}[Y_i^4 \mathbb{I} \{|Y_i| \leq1\}] + \left| \sum_{i=1}^n {\rm E}[Y_i^3 \mathbb{I} \{|Y_i| \leq 1\}]  \right| , \]
the following result holds.

\begin{lem}[\cite{RefHB}]\label{lemHB}
There exists a universal constant $D$ such that
\[ \delta \leq D\left( \|F^Y_n - \Phi \|_{\infty} + \sum_{i=1}^n \sigma_i^4 \right). \]
\end{lem}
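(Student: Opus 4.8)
The plan is to prove this statement as the reverse, lower-bound companion to the Berry--Esseen inequality of Lemma \ref{lemTy}, via a characteristic-function analysis confined to a bounded frequency band. Write $f(t) = \prod_{i=1}^n E[e^{itY_i}]$ for the characteristic function of $\sum_i Y_i$ and $\gamma(t) = e^{-t^2/2}$ for that of $\Phi$; since $\sum_i \sigma_i^2 = 1$, the sum has variance exactly $1$, so $\gamma$ is the natural comparison. The three pieces of $\delta$ --- the tail second moment $T_2 = \sum_i E[Y_i^2\mathbb{I}\{|Y_i|>1\}]$, the truncated fourth moment $T_4 = \sum_i E[Y_i^4\mathbb{I}\{|Y_i|\le1\}]$, and the truncated third-moment sum $T_3 = |\sum_i E[Y_i^3\mathbb{I}\{|Y_i|\le1\}]|$ --- are precisely the low-order Taylor coefficients that appear in $\log f_i(t) = \log E[e^{itY_i}]$ once each $Y_i$ is split at the level $|Y_i| = 1$. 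The whole argument consists of reading these coefficients off from $f-\gamma$ and expressing every resulting error in the \emph{currency} $\|F^Y_n-\Phi\|_\infty + \sum_i\sigma_i^4$.

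First I would set up the extraction mechanism. For a smooth weight $w$ supported in a fixed band $[-T_0,T_0]$, Fourier inversion gives $\int_{-T_0}^{T_0}(f(t)-\gamma(t))w(t)\,dt = \int_{\mathbb{R}}\check w(x)\,d(F^Y_n-\Phi)(x) = -\int_{\mathbb{R}}\check w'(x)(F^Y_n(x)-\Phi(x))\,dx$, so that $|\int (f-\gamma)w| \le \|\check w'\|_1\,\|F^Y_n-\Phi\|_\infty$, where $\|\check w'\|_1$ is a finite universal constant because $\check w$ is Schwartz (compact support of $w$ is what makes this finite). Thus any band-limited linear functional of $f-\gamma$ is controlled by the Kolmogorov distance. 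Crucially, confining $t$ to $[-T_0,T_0]$ means the potentially ill-behaved high-frequency part of $f$ never enters, so no Cram\'er-type smoothness assumption on the individual $Y_i$ is needed.

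Next I would expand on the band. Writing $f_i(t) = 1 + \sum_{k\le3}\frac{(it)^k}{k!}E[Y_i^k\mathbb{I}\{|Y_i|\le1\}] + (\text{remainder}) + E[(e^{itY_i}-1)\mathbb{I}\{|Y_i|>1\}]$ and summing $\log f_i$, the real part of $\log f(t) + t^2/2$ is, to leading orders, a combination of $T_2$ at order $t^2$ (through $\sum_i E[Y_i^2\mathbb{I}\{|Y_i|\le1\}] = 1 - T_2$ together with a tail term of size $\lesssim T_2$) and of $T_4 - 3\sum_i\sigma_i^4$ at order $t^4$ (the summed truncated fourth cumulants), whereas the imaginary part isolates $\Lambda_3 = \sum_i E[Y_i^3\mathbb{I}\{|Y_i|\le1\}]$ at order $t^3$. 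Testing $f-\gamma$ against an even weight at two frequencies pins down $T_2$ and $T_4$; testing against an odd weight matched to the skewness correction pins down $\Lambda_3$; in each case the error is a genuine Taylor/truncation remainder bounded by $\sum_i\sigma_i^4$ and by the already-extracted quantities. Carried out in the order $T_2+T_4$ first, then $T_3$, this yields $T_2 + T_4 \le C_1(\|F^Y_n-\Phi\|_\infty + \sum_i\sigma_i^4)$ and then $T_3 \le C_2(\|F^Y_n-\Phi\|_\infty + T_2 + T_4 + \sum_i\sigma_i^4)$, and combining gives $\delta = T_2+T_4+T_3 \le D(\|F^Y_n-\Phi\|_\infty + \sum_i\sigma_i^4)$ with a universal $D$.

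The hard part is the third-moment term. Unlike $T_2$ and $T_4$, the quantity $\Lambda_3$ enters as a signed sum, so cancellation among the $E[Y_i^3\mathbb{I}\{|Y_i|\le1\}]$ is permitted and it cannot be bounded termwise; it must be extracted as the unique odd (skewness) signal in $f-\gamma$ at exactly the right frequency, and one must verify that the competing real contributions $T_4$ and $\sum_i\sigma_i^4$ do not swamp it. Making this robust --- keeping every remainder strictly inside the budget $\|F^Y_n-\Phi\|_\infty + \sum_i\sigma_i^4$, uniformly in $n$ and in the possibly very unequal variances $\sigma_i^2$, and with no smoothness hypothesis on the summands --- is the crux, and is exactly where the bounded-band device of the second paragraph is indispensable.
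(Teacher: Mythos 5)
First, a point of comparison: the paper contains no proof of Lemma \ref{lemHB} at all --- it is quoted from Hall and Barbour (1984), whose argument works on the spatial side (via Stein's method), not through characteristic functions. So your proposal must stand on its own, and it has a fatal gap at its first step: the tail term $T_2=\sum_i {\rm E}[Y_i^2\mathbb{I}\{|Y_i|>1\}]$ cannot be extracted from band-limited functionals of $f-\gamma$, not because the bookkeeping is delicate, but because the low-frequency data genuinely does not contain $T_2$. The tail contribution to the characteristic function, $\sum_i {\rm E}[(e^{itY_i}-1-itY_i)\mathbb{I}\{|Y_i|>1\}]$, enters at the same order $t^2$ as the signal $\tfrac{t^2}{2}T_2$ you want to read off, with real part lying anywhere in $[-\tfrac{t^2}{2}T_2,\,0]$; it is \emph{not} a ``Taylor/truncation remainder bounded by $\sum_i\sigma_i^4$ and by the already-extracted quantities,'' and it can cancel the signal exactly. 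Since your $T_4$-extraction suffers the same contamination (of size up to $t^2T_2$), and your $\Lambda_3$-step is only claimed modulo $T_2+T_4$, the scheme collapses at the step you propose to do first.

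Here is a concrete counterexample to the claimed error structure. Let ${\rm P}(Y_i=\pm\tfrac32)=\tfrac{1}{4.5\,n}$ and ${\rm P}(Y_i=0)=1-\tfrac{2}{4.5\,n}$ for $i=1,\dots,n$. Then $\sum_i\sigma_i^2=1$, $T_2=1$, $T_4=0$, $\sum_i {\rm E}[Y_i^3\mathbb{I}\{|Y_i|\le1\}]=0$, and $\sum_i\sigma_i^4=1/n$, while
\[
f(t)=\left(1-\frac{1-\cos(3t/2)}{2.25\,n}\right)^{n}=\gamma(t)\bigl(1+O(t^4)+O(1/n)\bigr)
\]
uniformly on any fixed band, because $\bigl((3/2)^2/2\bigr)/2.25=\tfrac12$, i.e.\ the order-$t^2$ coefficient of $\log f$ is exactly $-\tfrac12$. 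So every band-limited functional of $f-\gamma$ is $O(T_0^4)+O(1/n)$ and your procedure would ``pin down'' $T_2\approx0$, whereas $T_2=1$: the extraction error is of the size of the signal itself and vastly exceeds the budget $\sum_i\sigma_i^4+(\text{already-extracted quantities})=O(1/n)$. The lemma is nonetheless true in this example, but only because $W=\sum_i Y_i$ is a span-$\tfrac32$ lattice variable with ${\rm P}(W=0)\to e^{-4/9}$, forcing $\|F^Y_n-\Phi\|_\infty\ge0.3$; that largeness is an atomic, high-frequency phenomenon which your bounded-band device --- the very feature you call indispensable --- is designed to ignore. Mass just above the truncation level $1$ counts in full toward $T_2$ yet is indistinguishable from bulk mass at low frequencies, so any correct proof must detect it either on the spatial side (as Hall and Barbour do) or by using high frequencies. (The unaddressed nonlinearity --- you control $\int(f-\gamma)w$ but expand $\log f+t^2/2$ --- is a further gap, but the one above is already fatal.)
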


As
\[ \delta \geq  \sum_{i=1}^n {\rm E}[Y_i^2 \mathbb{I} \{|Y_i| >1\}] + \left| \sum_{i=1}^n {\rm E}[Y_i^3 \mathbb{I} \{|Y_i| \leq 1\}]  \right| ,  \]
we use the RHS as a lower bound.
This bound is sufficient in Cases A, B$^\star$, and C1 to show the decay rate of $\|F_{n,\theta} - \Phi \|_\infty$.

\end{document}